\newtheorem{theorem}{Theorem}[section]
\newtheorem{lemma}[theorem]{Lemma}
\newtheorem{proposition}[theorem]{Proposition}
\theoremstyle{definition}
\newtheorem{definition}[theorem]{Definition}
\newlength{\Oldarrayrulewidth}
\newcommand{\Z}{\mathbb{Z}}
\newcommand{\e}{\textup{\textbf{e}}}
\begin{document}

\title{Paired $(n-1)$-to-$(n-1)$ Disjoint Path Covers in Bipartite Transposition-Like Graphs}
\author[1]{Anna~Coleman\thanks{acoleman@seu.edu}}
\author[2]{Gabrielle~Fischberg\thanks{gabrielle.fischberg@tufts.edu}}
\author[3]{Charles~Gong\thanks{charlesgong2003@gmail.com}}
\author[4]{Joshua~Harrington\thanks{joshua.harrington@cedarcrest.edu}}
\author[5]{Tony~W.~H.~Wong\thanks{wong@kutztown.edu}}
\affil[1]{Department of Mathematics, Southeastern University}
\affil[2]{Department of Mathematics, Tufts University}
\affil[3]{Department of Mathematical Sciences, Carnegie Mellon University}
\affil[4]{Department of Mathematics, Cedar Crest College}
\affil[5]{Department of Mathematics, Kutztown University of Pennsylvania}
\date{\today}

\maketitle

\begin{abstract}
A paired $k$-to-$k$ disjoint path cover of a graph $G$ is a collection of pairwise disjoint path subgraphs $P_1,P_2,\dotsc,P_k$ such that each $P_i$ has prescribed vertices $s_i$ and $t_i$ as endpoints and the union of $P_1,P_2,\dotsc,P_k$ contains all vertices of $G$. In this paper, we introduce bipartite transposition-like graphs, which are inductively constructed from lower ranked bipartite transposition-like graphs. We show that every rank $n$ bipartite transposition-like graph $G$ admit a paired $(n-1)$-to-$(n-1)$ disjoint path cover for all choices of $S=\{s_1,s_2,\dotsc,s_{n-1}\}$ and $T=\{t_1,t_2,\dotsc,t_{n-1}\}$, provided that $S$ is in one partite set of $G$ and $T$ is in the other.\\
\textit{MSC:} (Primary) 05C45; (Secondary) 05C70, 05C75.\\
\textit{Keywords:} Disjoint path covers, transposition graphs, Hamiltonian laceable.
\end{abstract}

\section{Introduction}\label{sec:intro}

For any positive integer $n$, let $[n]$ denote the set $\{1,2,\dotsc,n\}$. Given a graph $G$, let $S=\{s_1,s_2,\dotsc,s_k\}$ and $T=\{t_1,t_2,\dotsc,t_k\}$ be two disjoint $k$-subsets of $V(G)$. A \emph{paired $k$-to-$k$ disjoint path cover} (sometimes abbreviated as \emph{$k$-PDPC}) of $G$ is a collection of path subgraphs $P_1,P_2,\dotsc,P_k$ of $G$ such that $V(P_1),V(P_2),\dotsc,V(P_k)$ form a partition of $V(G)$ and $P_i$ has $s_i$ and $t_i$ as its endpoints for each $i\in[k]$. If $G$ admits a $k$-PDPC for every choice of disjoint $k$-subsets $S$ and $T$, then $G$ is said to be \emph{paired $k$-to-$k$ disjoint path coverable}. Note that a paired $1$-to-$1$ disjoint path coverable graph is also said to be \emph{Hamiltonian-connected}.

If $G$ is bipartite with partite sets $V_1$ and $V_2$, then $S\cup T$ must be \emph{balanced}, i.e.,
\begin{equation*}\label{eqn:balanced}
|(S\cup T)\cap V_1|-|(S\cup T)\cap V_2|=2(|V_1|-|V_2|),
\end{equation*}
in order for a $k$-PDPC to exist in $G$. In particular, if $G$ is an \emph{equitable} bipartite graph, i.e., $|V_1|=|V_2|$, then $S\cup T$ is balanced if and only if $|(S\cup T)\cap V_1|=|(S\cup T)\cap V_2|$. A bipartite graph $G$ is said to be \emph{balanced paired $k$-to-$k$ disjoint path coverable} if there is a $k$-PDPC whenever $S\cup T$ is balanced. Another name for a balanced paired $1$-to-$1$ disjoint path coverable graph is \emph{Hamiltonian-laceable}.

Due to the following proposition, research in this area often focuses on finding the largest $k$ for which a graph is paired $k$-to-$k$ disjoint path coverable or balanced paired $k$-to-$k$ disjoint path coverable.

\begin{proposition}\label{prop:differentk}\ 
%If $G$ is paired $k$-to-$k$ disjoint path coverable (respectively, balanced paired $k$-to-$k$ disjoint path coverable) for some positive integer $k$, then $G$ is paired $\widetilde{k}$-to-$\widetilde{k}$ disjoint path coverable (respectively, balanced paired $\widetilde{k}$-to-$\widetilde{k}$  disjoint path coverable) for all positive integers $\widetilde{k}\leq k$. 
\begin{enumerate}[$(a)$]
\item\label{item:differentk} Let $G$ be paired $k$-to-$k$ disjoint path coverable for some positive integer $k$. Then $G$ is paired $\ell$-to-$\ell$ disjoint path coverable for all $\ell\in[k]$. 
\item\label{item:differentkbalanced} Let $G$ be balanced paired $k$-to-$k$ disjoint path coverable for some positive integer $k$. If $G$ is equitable, then $G$ is balanced paired $\ell$-to-$\ell$  disjoint path coverable for all $\ell\in[k]$.
\item\label{item:ours} Let $G$ be an equitable bipartite graph with partite sets $V_1$ and $V_2$. If $G$ admits a $k$-PDPC for every choice of $k$-subsets $\overline{S}\subseteq V_1$ and $\overline{T}\subseteq V_2$, then for all $\ell\in[k]$, $G$ admits an $\ell$-PDPC for every choice of $\ell$-subsets $S\subseteq V_1$ and $T\subseteq V_2$.
\end{enumerate}
\end{proposition}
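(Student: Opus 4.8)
The plan is to prove the final statement, part~\ref{item:ours}, by a single-step downward reduction: I will show that if $G$ admits an $m$-PDPC with sources in $V_1$ and targets in $V_2$ for every valid choice, then the same holds with $m-1$ in place of $m$, and then iterate this from $m=k$ down to $m=\ell+1$. The base case $m=k$ is exactly the hypothesis. The engine of the reduction is the standard \emph{split-and-merge} device: replace one prescribed pair by two pairs joined by a single edge, solve the resulting larger instance, and then reconnect the two corresponding paths across that edge. Throughout, I will lean on the fact that $G$ is bipartite, so every edge has one endpoint in $V_1$ and one in $V_2$; this is what lets me respect the partite constraints essentially for free.

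For the reduction step, fix $(m-1)$-subsets $S=\{s_1,\dots,s_{m-1}\}\subseteq V_1$ and $T=\{t_1,\dots,t_{m-1}\}\subseteq V_2$ with the prescribed pairing $(s_i,t_i)$, and suppose for the moment that I have an edge $\{a,b\}\in E(G)$ with $a\in V_2\setminus T$ and $b\in V_1\setminus S$. I then form the $m$-PDPC instance whose pairs are $(s_1,t_1),\dots,(s_{m-2},t_{m-2})$ together with the two new pairs $(s_{m-1},a)$ and $(b,t_{m-1})$. This is legal: its sources $S\cup\{b\}$ are $m$ distinct vertices of $V_1$ and its targets $T\cup\{a\}$ are $m$ distinct vertices of $V_2$, using $b\notin S$ and $a\notin T$. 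The $m$-PDPC coverability available at this stage of the induction yields disjoint covering paths $Q_1,\dots,Q_{m-2}$, $Q_{m-1}\colon s_{m-1}\to a$, and $Q_m\colon b\to t_{m-1}$. Since $Q_{m-1}$ and $Q_m$ are vertex-disjoint and $\{a,b\}$ joins their endpoints, the concatenation $Q_{m-1}\cup\{a,b\}\cup Q_m$ is a single path from $s_{m-1}$ to $t_{m-1}$; together with $Q_1,\dots,Q_{m-2}$ this is the desired $(m-1)$-PDPC for $S$ and $T$.

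The main obstacle, and the only nonroutine point, is producing the edge $\{a,b\}$, i.e.\ guaranteeing an edge between $V_1\setminus S$ and $V_2\setminus T$ whenever $|S|=|T|\le k-1$. I expect to extract this from the $k$-PDPC hypothesis itself rather than from any ad hoc density estimate. Enlarge $S$ and $T$ to $(k-1)$-sets $S'\supseteq S$ and $T'\supseteq T$ (possible since equitability gives $|V_1|=|V_2|\ge k$), choose any $s^{*}\in V_1\setminus S'$ and $t^{*}\in V_2\setminus T'$, and apply the $k$-PDPC hypothesis to the instance that pairs $s^{*}$ with $t^{*}$ and the elements of $S'$ with those of $T'$. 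Let $P$ be the resulting path from $s^{*}$ to $t^{*}$ and let $a$ be the vertex of $P$ adjacent to $s^{*}$; bipartiteness forces $a\in V_2$, and since $P$ is vertex-disjoint from the other $k-1$ paths, which contain all of $T'$, we get $a\notin T'\supseteq T$. Taking $b=s^{*}\in V_1\setminus S'\subseteq V_1\setminus S$ then gives the required edge. The remaining verifications, that all endpoint sets stay within their correct partite classes, that the newly introduced vertices differ from the prescribed ones, and that the iteration terminates at $\ell$, are routine; parts~\ref{item:differentk} and~\ref{item:differentkbalanced} follow from the identical device, differing only in the bookkeeping used to locate the free edge in the general and balanced settings.
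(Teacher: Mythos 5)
Your proposal is correct and takes essentially the same approach as the paper's proof: a split-and-merge reduction that replaces one prescribed pair $(s_{m-1},t_{m-1})$ by two pairs joined across an edge whose endpoints lie outside $S\cup T$ (respecting the partite sets), solves the larger instance, and concatenates the two resulting paths over that edge. Your explicit extraction of the free edge from a $k$-PDPC instance is simply a fleshed-out version of the paper's one-line remark that the adjacent pair exists because $G$ is paired $k$-to-$k$ disjoint path coverable.
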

\begin{proof}
$(\ref{item:differentk})$ It suffices to show that $G$ is paired $(k-1)$-to-$(k-1)$ disjoint path coverable. Given two disjoint $(k-1)$-subsets $S=\{s_1,s_2,\dotsc,s_{k-1}\}$ and $T=\{t_1,t_2,\dotsc,t_{k-1}\}$ of $V(G)$, let $\overline{S}=\{\overline{s}_1,\overline{s}_2,\dotsc,\overline{s}_k\}$ and $\overline{T}=\{\overline{t}_1,\overline{t}_2,\dotsc,\overline{t}_k\}$ such that $\overline{s}_i=s_i$ and $\overline{t}_i=t_i$ for all $i\in[k-2]$, $\overline{s}_{k-1}=s_{k-1}$, $\overline{t}_k=t_{k-1}$, and $\overline{s}_k,\overline{t}_{k-1}\in V(G)\setminus(S\cup T)$ are two adjacent vertices. We note that the adjacent pair $\overline{s}_k,\overline{t}_{k-1}$ exists since $G$ is paired $k$-to-$k$ disjoint path coverable. By the assumption, $G$ admits a $k$-PDPC $\overline{P}_1,\overline{P}_2,\dotsc,\overline{P}_k$ such that $\overline{P}_i$ has endpoints $\overline{s}_i$ and $\overline{t}_i$ for each $i\in[k]$. Letting $P_i=\overline{P}_i$ for all $i\in[k-2]$ and $P_{k-1}=\overline{P}_{k-1}\cup \overline{t}_{k-1}\overline{s}_k\cup\overline{P}_k$, we obtain a desired $(k-1)$-PDPC of $G$.

%\st{\noindent$(\ref{item:differentkbalanced})$ A similar proof as in part~$(\ref{item:differentk})$ works since $|\{s_k',t_{k-1}'\}\cap V_1|=|\{s_k',t_{k-1}'\}\cap V_2|=1$, implying that $S'\cup T'$ is balanced if $S\cup T$ is balanced.}

\noindent $(\ref{item:differentkbalanced})\text{ and }(\ref{item:ours})$ A similar proof as in part~$(\ref{item:differentk})$ works by imposing the additional condition that $\overline{s}_k\in V_1$ and $\overline{t}_{k-1}\in V_2$. This ensures that $\overline{S}\cup\overline{T}$ is balanced if $S\cup T$ is balanced.
\end{proof}

Paired disjoint path covers is a subject of interest in the literature. They are particularly important in structured networks, many of which are Cayley graphs. Let $\mathcal{G}$ be a group and $\mathcal{S}\subseteq\mathcal{G}$ such that $\mathcal{S}$ is closed under inversion. An \emph{undirected Cayley graph} $\Gamma(\mathcal{G},\mathcal{S})$ has a vertex set $\mathcal{G}$, and there is an edge between $\rho,\sigma\in\mathcal{G}$ if and only if $\rho^{-1}\sigma\in\mathcal{S}$. Of particular note, if $\mathcal{G}=\Z_2^d$ with addition as the binary operation and $\mathcal{S}=\{\e_1,\e_2,\dotsc,\e_d\}$, where $\e_i$ is an ordered $d$-tuple with $1$ in the $i$-th entry and $0$ everywhere else, then $\Gamma(\mathcal{G},\mathcal{S})$ is a $d$-dimensional hypercube. More generally, if $\mathcal{G}=\Z_{m_1}\times\Z_{m_2}\times\dotsb\times\Z_{m_d}$ and $\mathcal{S}=\{\pm\e_1,\pm\e_2,\dotsc,\pm\e_d\}$, then $\Gamma(\mathcal{G},\mathcal{S})$ is a $d$-dimensional hypertorus.

Paired disjoint path covers have been studied in hypercubes \cite{ck,c2012,c2013,dg,gd,jpc-theoretcomputsci}, hypertori \cite{c2016,kw,pi,p2018}, and graphs with a similar structure \cite{jpc-informsci,kp,p2016,p2021,p2023,pkl}. It is worth mentioning that a variant topic, namely the \emph{unpaired} disjoint path covers, has also received a lot of attention \cite{c2009,kp,llmx,lmzc,ll,lw,nx,p2022,sm,zw}.

Similar research has been conducted on Cayley graphs other than hypercubes and hypertori. For example, star graphs, investigated by Park and Kim \cite{pk}, are Cayley graphs with $\mathcal{G}$ the symmetric group $S_n$ and $\mathcal{S}$ the collection of transpositions $\{(1,i):2\leq i\leq n\}$, while Li, Yang, and Meng \cite{lym} as well as Qiao, Sabir, and Meng \cite{qsm} studied Cayley graphs generated by transposition trees. If $\mathcal{G}=S_n$ and $\mathcal{T}_n=\{(i,j):1\leq i<j\leq n\}$, then the Cayley graph $\Gamma(S_n,\mathcal{T}_n)$ is called the \emph{transposition graph of rank $n$}. Note that for $n\geq2$, the transposition graph $\Gamma(S_n,\mathcal{T}_n)$ has a structural property that it is composed of $n$ subgraphs $\Gamma_1,\Gamma_2,\dotsc,\Gamma_n$, each $\Gamma_i$ is an isomorphic copy of $\Gamma(S_{n-1},\mathcal{T}_{n-1})$, and there is a perfect matching between $\Gamma_i$ and $\Gamma_j$ for all $1\leq i<j\leq n$. In view of this, we have the following two definitions.

\begin{definition}
If $G_1,G_2,\dotsc,G_n$ are $n$ graphs with the same number of vertices, then $G$ is said to be a \emph{weld} of $G_1,G_2,\dotsc,G_n$ if $G$ is composed of $G_1,G_2,\dotsc,G_n$ and there is a perfect matching between $G_i$ and $G_j$ for all $1\leq i<j\leq n$.
\end{definition}

\begin{definition}\label{def:transpositionlike}
A \emph{transposition-like graph of rank $1$} is a Hamiltonian-connected or Hamiltonian-laceable graph. For each integer $n\geq2$, a \emph{transposition-like graph of rank $n$} is a weld of $G_1,G_2,\dotsc,G_\ell$, where $\ell\geq n$ and each $G_i$ is a transposition-like graph of rank $n-1$ with the same number of vertices.
\end{definition}

In this paper, we focus on transposition-like graphs that are bipartite.  Note that these graphs are equitable when the rank is at least $2$, as shown in the following lemma.

\begin{lemma}\label{lem:bipartiteequitable}
Let $n\geq2$ be an integer. Then every bipartite transposition-like graph of rank $n$ is equitable.
\end{lemma}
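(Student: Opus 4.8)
The plan is to argue directly from the top-level weld structure, with no induction on the rank. Since $n\geq2$, Definition~\ref{def:transpositionlike} presents $G$ as a weld of graphs $G_1,G_2,\dotsc,G_\ell$ with $\ell\geq n\geq2$, each on the same number of vertices, say $m$. Fix the bipartition $(V_1,V_2)$ of $G$. As each $G_i$ is a subgraph of the bipartite graph $G$, the pair $(V_1\cap V(G_i),\,V_2\cap V(G_i))$ is a bipartition of $G_i$; I would set $a_i=|V_1\cap V(G_i)|$ and $b_i=|V_2\cap V(G_i)|$, so that $a_i+b_i=m$ for every $i$.

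The key observation concerns the perfect matchings guaranteed by the weld. For each pair $i\neq j$ there is a perfect matching $M_{ij}$ between $G_i$ and $G_j$. Every edge of $M_{ij}$ joins a vertex of $G_i$ to a vertex of $G_j$, and, since $G$ is bipartite, it must also join $V_1$ to $V_2$. Hence each vertex of $V_1\cap V(G_i)$ is matched to a vertex of $V_2\cap V(G_j)$, and—because $M_{ij}$ saturates both sides—this restriction of $M_{ij}$ is in fact a bijection between $V_1\cap V(G_i)$ and $V_2\cap V(G_j)$. Therefore $a_i=b_j$, and symmetrically $b_i=a_j$. Adding these two identities yields $a_i+a_j=b_i+b_j$ for every pair $i\neq j$.

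It then remains to pass from these pairwise identities to the global equality $|V_1|=|V_2|$. Summing $a_i+a_j=b_i+b_j$ over all $\binom{\ell}{2}$ pairs, each index occurs in exactly $\ell-1$ of them, so the two sides become $(\ell-1)\sum_i a_i=(\ell-1)|V_1|$ and $(\ell-1)\sum_i b_i=(\ell-1)|V_2|$. Since $\ell\geq n\geq2$, the factor $\ell-1$ is nonzero and may be cancelled, giving $|V_1|=|V_2|$, which is exactly equitability.

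I expect the only real subtlety to lie in the middle paragraph: one must verify that bipartiteness forces every edge of $M_{ij}$ to cross between $V_1$ and $V_2$, and that the saturation property upgrades the resulting injection into a genuine bijection, so that one obtains the equality $a_i=b_j$ rather than a mere inequality. The counting step is then routine. Notably, no property of the $G_i$ beyond their being subgraphs of $G$ is needed here; the rank-$(n-1)$ hypothesis and the Hamiltonian assumptions at rank $1$ play no role, and the hypothesis $n\geq2$ enters solely to ensure $\ell\geq2$ so that the weld (and hence a pair $i\neq j$) exists and $\ell-1\neq0$.
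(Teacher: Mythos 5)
Your proof is correct, but it takes a genuinely different route from the paper's. You work entirely at the top level: bipartiteness of $G$ forces every edge of each welding matching $M_{ij}$ to cross between $V_1$ and $V_2$, so the matching restricts to a bijection between $V_1\cap V(G_i)$ and $V_2\cap V(G_j)$, giving $a_i=b_j$ and $b_i=a_j$ for all $i\neq j$; summing over pairs then yields $|V_1|=|V_2|$. The paper instead descends through the recursive structure down to rank $1$: it uses Hamiltonian laceability to conclude that even-order rank $1$ pieces are equitable, a parity contradiction to show that an odd-order rank $1$ piece forces its rank $2$ parent to be a weld of exactly two pieces (one with a black surplus, one with a white surplus, hence the parent is equitable), and then propagates equitability upward via the observation that a bipartite weld of equitable bipartite graphs is equitable. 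Your argument is more elementary and more general: it uses none of the rank hypotheses or the Hamiltonicity built into Definition~\ref{def:transpositionlike}, and in fact proves that \emph{any} bipartite weld of $\ell\geq2$ graphs is equitable (and, for $\ell\geq3$, that each piece is individually balanced with respect to $G$'s bipartition, since then $a_i=b_j=a_k=b_i$ for distinct $i,j,k$). It also sidesteps a delicacy in the paper's final step, where equitability of a piece is stated with respect to the piece's own bipartition rather than the one induced by $G$ (these coincide only when the piece is connected); your bookkeeping is in terms of the induced partition throughout, so no such identification is needed. What the paper's longer route buys is structural information about how odd-order rank $1$ pieces can sit inside the construction, which motivates the parity hypothesis appearing in Theorem~\ref{thm:transpositionDPC}, but none of that is needed for the lemma itself.
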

\begin{proof}
If a graph is bipartite, then all its vertices can be properly colored with black and white, implying that every subgraph is bipartite. Hence, any rank $1$ transposition-like subgraph $H$ is Hamiltonian-laceable by Definition~\ref{def:transpositionlike}. If $H$ has an even number of vertices, then $H$ is equitable. If $H$ has an odd number of vertices, then we claim that the rank $2$ transposition-like subgraph $K$ containing $H$ is a weld of exactly two rank $1$ transposition-like subgraphs. To prove this claim, suppose by way of contradiction that $K$ is a weld of $H,H_2,H_3,\dotsc,H_j$ for some integer $j\geq3$. We may assume without loss of generality that there is one more black vertex in $H$ than white. Since there is a perfect matching between $H$ and $H_i$ inside $K$ for all $2\leq i\leq j$, in order to maintain the bipartite property of $K$, there must be one more white vertex in $H_i$ than black. This makes it impossible to have a perfect matching between $H_2$ and $H_3$ inside $K$, which leads to a contradiction. Therefore, $K$ is a weld of $H$ and $H_2$, where $H$ has one more black vertex than white and $H_2$ has one more white vertex than black. As a result, $K$ is equitable. Finally, our result follows from the simple observation that if $G_1,G_2,\dotsc,G_\ell$ are equitable bipartite graphs and their weld $G$ is bipartite, then $G$ is equitable. 
\end{proof}

From this lemma, we see that in order for a bipartite transposition-like graph of rank at least $2$ to have a $k$-PDPC for any positive integer $k$, it is necessary to have an equal number of black and white vertices in $S\cup T$. Here is the main theorem that we will prove in Section~\ref{sec:main}.

\begin{theorem}\label{thm:transpositionDPC}
Let $n\geq2$ be an integer, and let $G$ be a bipartite transposition-like graph of rank $n$ with partite sets $V_1$ and $V_2$. Assume that during the welding process to form $G$, every rank $1$ transposition-like graph that $G$ is built up from is either a single vertex or has an even number of vertices. Then for every choice of $(n-1)$-subsets $S\subseteq V_1$ and $T\subseteq V_2$, $G$ admits an $(n-1)$-PDPC.
\end{theorem}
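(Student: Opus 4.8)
The plan is to argue by induction on the rank $n$, building the required cover from Hamiltonian-type structures inside the constituent copies and gluing them along the matching edges of the weld. Two facts will be used repeatedly: every matching edge of a weld joins a vertex of $V_1$ to a vertex of $V_2$ (since $G$ is bipartite), and, by the inductive hypothesis together with Proposition~\ref{prop:differentk}$(\ref{item:ours})$, each rank-$(n-1)$ copy admits an $m$-PDPC with sources in $V_1$ and sinks in $V_2$ for every $m\le n-2$.

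For the base case $n=2$, an $(n-1)$-PDPC is a single Hamiltonian path from $s_1\in V_1$ to $t_1\in V_2$, so the task is to show that a weld $G=G_1\cup\cdots\cup G_\ell$ of Hamiltonian-laceable (or single-vertex) graphs is Hamiltonian-laceable. I would prove this by \emph{threading}: order the copies so that $s_1$ lies in the first and $t_1$ in the last, lay down a Hamiltonian path across each copy, and connect consecutive copies by a matching edge whose ends fall in the partite classes dictated by laceability. The equitability furnished by Lemma~\ref{lem:bipartiteequitable} guarantees that these entry/exit vertices have the correct colors; single-vertex copies are crossed trivially, and the case $s_1,t_1\in G_1$ is handled by exiting $G_1$, threading the other copies, and re-entering $G_1$ to finish.

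For the inductive step I would present the $(n-1)$-PDPC of $G$ as a concatenation of \emph{segments}: each global path $P_i$ meets a copy in a disjoint union of subpaths, and consecutive copies along $P_i$ are joined by a matching edge. Because matching edges run between $V_1$ and $V_2$, the segments inside a fixed copy $G_j$ may be oriented from $V_1$ to $V_2$, so they constitute an honest $m_j$-PDPC of $G_j$ that the inductive hypothesis produces as long as $m_j\le n-2$; the number $m_j$ of segments must also be at least $1$ so that every vertex of $G_j$ is covered. When the $2(n-1)$ terminals are spread out, so that no copy receives too many, one distributes the matching-edge transit vertices so that each $m_j$ lands in the window $[1,n-2]$, solves the local PDPCs, and glues the segments into $n-1$ paths with the prescribed endpoints; here the slack $\ell\ge n$ is precisely what leaves enough spare copies to absorb the threading.

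The main obstacle is the \emph{concentration} case. If a copy $G_a$ contains $c$ of the sources and $c'$ of the sinks, then it must host at least $\max(c,c')$ segments, since each terminal is an endpoint of a distinct global path and hence begins or ends a distinct segment inside $G_a$. In the extreme where all $n-1$ sources fall in $G_a$, this forces $m_a\ge n-1$, one more than the cap $n-2$ supplied by the inductive hypothesis; no redistribution of transit vertices can lower $m_a$ below $\max(c,c')$, so the direct inductive hypothesis simply does not apply to $G_a$. My plan is to recognize the local problem in $G_a$ for what it is, namely an $(n-1)$-PDPC with prescribed sources but with the exit (sink) vertices free to be chosen, since the global sinks lie in other copies, and to strengthen the inductive statement so that a rank-$(n-1)$ graph carries one \emph{more} path than its headline count whenever the additional endpoints may be selected freely, the freedom paying for the extra path. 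One then distributes the $n-1$ exit vertices of $G_a$ among the $\ell-1\ge n-1$ other copies so that none of them is overloaded in turn. I expect that verifying this strengthened statement, that the rich all-pairs matching and the recursive weld structure of an overloaded copy always let it swallow one extra path with free endpoints, is where essentially all of the difficulty lies, and that it will require a careful case analysis organized by how the $2(n-1)$ terminals are distributed among the copies.
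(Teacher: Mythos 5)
Your skeleton---induction on rank, a threading argument for the rank-$2$ base case, a case analysis on how the terminals distribute over the copies, and a correct diagnosis of the concentration obstacle---coincides with the paper's strategy (your spread case is Case~1 of the proof of Proposition~\ref{prop:inductionDPC}, and your threading is its Lemma~\ref{lem:emptylayers}). The genuine gap is exactly the step you defer. In the one sub-case where your strengthened statement is easy---all $n-1$ sources in $G_a$ and \emph{every} sink free---it does follow from the headline hypothesis by a cut: run an $(n-2)$-PDPC of $G_a$ at full capacity with sources $s_1,\dotsc,s_{n-2}$ and chosen white sinks, observe that $s_{n-1}$ is then an interior vertex whose predecessor $v$ on its path is white, and delete the edge $vs_{n-1}$; the two pieces give $n-1$ paths from the prescribed sources to white vertices. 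But in the generic concentration cases the overloaded copy also contains prescribed sinks, and then a cut produces wrong pairings: if the path through the extra source $s_n$ ends at a prescribed sink $t_1$, cutting the edge before $s_n$ leaves a path joining $s_n$ to $t_1$ (and cutting after $s_n$ leaves a piece with both $s_1$ and $s_n$ as endpoints), so the extra path steals another path's sink, and no reshuffling inside $G_a$ repairs this. Hence the strengthened statement in the form you would actually need it---prescribed sources \emph{and} some prescribed, correctly paired sinks, plus one extra path---is not simply ``paid for'' by the free endpoints, and your plan to prove it by recursing into the weld structure of the overloaded copy itself heads into an induction substantially harder than the theorem.

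What closes this gap in the paper is surgery that is global rather than local to $G_a$: cut the offending path at \emph{two} edges, isolating $s_n$ together with one white neighbor, and reconnect the two outer pieces through a Hamiltonian path (or a $2$-PDPC) of a spare, terminal-free copy, entered and exited along matching edges; the isolated piece $s_nv_n$ then exits through its own matching edge toward $t_n$. Several distinct variants of this surgery are required---both $s_n$ and $t_n$ interior in the same copy, on one path or on two different paths (Case~2); a copy with $w_{j_0}=n$ holding a mixture of sources and sinks (Case~5); two copies each with $w=n$ (Case~6)---and none of them is covered by your two regimes. Separately, your uniform induction ignores a size constraint: the greedy, collision-free choice of transit vertices needs each copy to have at least $2n-1$ vertices of each color (condition $|V(G_j)|\geq 4n-2$ in Proposition~\ref{prop:inductionDPC}), which can fail for rank-$3$ copies (as few as $6$ vertices). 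This is precisely why the paper proves ranks $2$, $3$, and the small-copy rank-$4$ case by separate ad hoc arguments before the induction takes over, a layer your proposal omits.
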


%We remark that if $G$ satisfies the conditions specified in Theorem~\ref{thm:transpositionDPC}, then during the welding process to form $G$, for all $2\leq j\leq n$, every rank $j$ transposition-like graph that $G$ is built up from has an even number of vertices. 
Due to Lemma~\ref{lem:bipartiteequitable} and the structure of transposition-like graphs, the key to prove Theorem~\ref{thm:transpositionDPC} is to establish the following proposition.

\begin{proposition}\label{prop:inductionDPC}
Let $n\geq3$ be an integer, and let $G_1,G_2,\dotsc,G_\ell$ be $\ell$ graphs, where $\ell\geq n+1$. Assume that each $G_j$
\begin{enumerate}[$(a)$]
\item\label{item:cond1} has the same number of vertices with $|V(G_j)|\geq4n-2$,
\item\label{item:cond2} is bipartite and equitable with partite sets $V_{j,1}$ and $V_{j,2}$, and
\item\label{item:cond3} admits an $(n-1)$-PDPC for all $(n-1)$-subsets $S_j\subseteq V_{j,1}$ and $T_j\subseteq V_{j,2}$.
\end{enumerate}
Let $G$ be a weld of $G_1,G_2,\dotsc,G_\ell$. If $G$ is bipartite with partite sets $V_1$ and $V_2$, then for every choice of $n$-subsets $S\subseteq V_1$ and $T\subseteq V_2$, $G$ admits an $n$-PDPC.
%Let $n\geq3$ be an integer, and let $G_1,G_2,\dotsc,G_k$ be $k$ graphs, where $k\geq n$. Assume that each $G_i$
%\begin{itemize}
%\item has the same number of vertices,
%\item is bipartite with partite sets $V_{i,1}$ and $V_{i,2}$ such that $|V_{i,1}|=|V_{i,2}|$, and
%\item admits a paired $(n-2)$-to-$(n-2)$ disjoint path cover for all subsets $S_i\subseteq V_{i,1}$ and $T_i\subseteq V_{i,2}$ such that $|S_i|=|T_i|=n-2$.
%\end{itemize}
%Let $G$ be the weld of $G_1,G_2,\dotsc,G_k$. If $G$ is bipartite with partite sets $V_1$ and $V_2$ such that $|V_1|=|V_2|$, then $G$ admits a paired $(n-1)$-to-$(n-1)$ disjoint path cover for all subsets $S\subseteq V_1$ and $T\subseteq V_2$ such that $|S|=|T|=n-1$.
\end{proposition}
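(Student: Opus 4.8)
The plan is to reduce the construction of an $n$-PDPC of $G$ to the assembly of \emph{local path systems}, one inside each $G_j$, glued together along matching edges of the weld. Since $G$ is bipartite I align the partite sets by setting $V_{j,1}=V(G_j)\cap V_1$ and $V_{j,2}=V(G_j)\cap V_2$, so that every matching edge of the weld joins some $V_{i,1}$ to some $V_{j,2}$. Writing $a_j=|S\cap V(G_j)|$ and $b_j=|T\cap V(G_j)|$ we have $\sum_j a_j=\sum_j b_j=n$. For each $j$ I prescribe a collection of $m_j$ vertex-disjoint paths covering $V(G_j)$, each running from $V_{j,1}$ to $V_{j,2}$, whose endpoints are either terminals of $S\cup T$ lying in $G_j$ or \emph{ports}; I then choose, for each port, one incident matching edge (the weld supplies one to every other subgraph) so that concatenating along these edges yields exactly the desired $n$ paths. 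A counting identity makes the global bookkeeping automatic: if $G_j$ hosts $m_j$ local paths then it supplies $m_j-a_j$ ports in $V_{j,1}$ and $m_j-b_j$ ports in $V_{j,2}$, so the number of matching edges used is $\sum_j m_j-n$, and the concatenation therefore produces $\sum_j m_j-(\sum_j m_j-n)=n$ pieces. What remains is to guarantee that each local system is realizable, that the pieces close up into $s_i$-to-$t_i$ paths rather than cycles, and that the port colors are consistent across the chosen edges.

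Realizability is where the hypotheses on the $G_j$ enter. Whenever $m_j\le n-1$, Proposition~\ref{prop:differentk}$(\ref{item:ours})$ covers $V(G_j)$ by exactly $m_j$ paths with any prescribed sources in $V_{j,1}$ and sinks in $V_{j,2}$; in particular it tolerates the fact that the partner of a port is forced by the (fixed) perfect matching, since the landing vertex always lies in the correct partite set. The bound $|V(G_j)|\ge 4n-2$, together with $|V_{j,1}|=|V_{j,2}|\ge 2n-1$, guarantees that the at most $2n$ terminals and the needed ports can be chosen to be distinct vertices and that distinct ports routed into a common subgraph never collide. The only genuine constraint is $m_j\ge\max(a_j,b_j)$, because each terminal must be an endpoint of its own local path. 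I expect the resulting \emph{clustering} phenomenon to be the main obstacle: a single subgraph may contain as many as $n$ sources (or $n$ sinks), which forces $m_j=n$ and exceeds the $(n-1)$-PDPC budget.

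To defeat the clustered case I will use a \emph{splitting} device. Suppose $a_j=n$, so all of $S$ lies in $G_j$. I first apply the $(n-1)$-PDPC with sources $s_1,s_2,\dotsc,s_{n-1}$ and $n-1$ freely chosen ports in $V_{j,2}$, obtaining $n-1$ disjoint paths that cover $V(G_j)$. The remaining source $s_n$ is then an interior vertex of one of these paths; cutting that path at the edge immediately preceding $s_n$ splits it into two, promoting $s_n$ to an endpoint and creating exactly one new port. The colors are correct because $s_n\in V_{j,1}$ forces its path-neighbor into $V_{j,2}$. This produces $n$ disjoint paths covering $V(G_j)$ with all $n$ sources as endpoints, at the cost of a single extra port, and the symmetric argument handles $b_j=n$. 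More generally the same cut-and-promote move realizes $m_j=n$ whenever a single terminal would otherwise be forced to be interior, so clustering never actually blocks the construction.

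With realizability secured, the proof reduces to exhibiting, for every distribution $(a_j,b_j)$, a routing in which $1\le m_j\le n-1$ at every subgraph that carries no cluster and $m_j=n$ at most at a clustered one. I will do this by ordering $G_1,G_2,\dotsc,G_\ell$ and threading the $n$ paths through them as monotonically as possible, exploiting the spare subgraph guaranteed by $\ell\ge n+1$ to arrange that every $G_j$ is visited by at least one path (so $m_j\ge1$ and $G_j$ is covered) yet skipped by at least one path (so $m_j\le n-1$ away from any cluster). Finally I will verify that the concatenation pairs each $s_i$ with its intended $t_i$ and contains no closed cycle, which, together with the counting identity above, yields the required $n$-PDPC of $G$.
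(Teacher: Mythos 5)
There is a genuine gap, and it sits exactly where you wave it away with ``clustering never actually blocks the construction.'' Your cut-and-promote device severs one edge, the one immediately preceding $s_n$, and turns the cut vertex into a new port. This works only when the path being cut runs from a source to a freely chosen port. It fails when $s_n$ lies in the interior of a local path $s_i\sim t_i$ \emph{both} of whose endpoints are terminals inside the same subgraph --- which is unavoidable when $S\cup T\subseteq V(G_1)$, and occurs generally whenever the clustered subgraph also contains sinks. Cutting the single edge before $s_n$ leaves a piece $s_n\sim\dotsb\sim t_i$ in which $t_i$ is stranded behind $s_n$: the new port on the piece headed by $s_i$ must exit $G_1$, but it can never re-enter $G_1$ to reach $t_i$, since $G_1$ is already covered. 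The correct surgery (the paper's) removes \emph{two} edges around $s_n$, extracts a two-vertex piece $v_ns_n$ that exits toward $t_n$, and then rejoins the two outer pieces $s_i\sim u_0$ and $v_0\sim t_i$ by a \emph{detour}: a Hamiltonian path through a terminal-free spare subgraph (which exists because $\ell\geq n+1$) connecting the matched neighbors $u_0^*$ and $v_0^*$. This rejoining mechanism --- a path that leaves its subgraph and whose two loose ends are stitched to \emph{each other} rather than routed toward sinks --- does not exist in your port framework, where every port exits toward its own sink's subgraph.

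The gap compounds in the doubly clustered configurations, which form the bulk of the paper's proof. If $S\subseteq V(G_{j_1})$ and $T\subseteq V(G_{j_2})$ (or two subgraphs each carry $n$ terminals with sources and sinks mixed), then after your split in $G_{j_1}$ you have $n$ forced landing vertices in $G_{j_2}$ that, together with the $n$ sinks there, would require an $n$-PDPC of $G_{j_2}$ --- again over budget. A second cut inside $G_{j_2}$ is forced, its position is not free (one of the vertices to be promoted is a forced landing vertex $u_n$, not a terminal you placed), and one must split into subcases according to whether $u_n$ and $t_n$ land on the same or on different local paths; the resulting loose ends require Hamiltonian-path detours through \emph{two} further spare subgraphs ($G_3$ and $G_4$ in the paper's Cases 3 and 6). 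Your accounting of ``a single extra port'' and the closing claim that monotone threading finishes the proof do not survive these cases. A secondary, fixable inaccuracy: ports originating in \emph{different} subgraphs and landing in a common one are matched by different perfect matchings, so ``never collide'' is not a bijection argument; it needs the counting the paper does in its Case 1, choosing each port outside a forbidden set of size at most $2n-2<2n-1\leq|V_{j,2}|$.
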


\section{Main Results}\label{sec:main}

From this point onward, if $G$ is bipartite with partite sets $V_1$ and $V_2$, then we color the vertices in $V_1$ black and the vertices in $V_2$ white. Since we assume that $S\subseteq V_1$ and $T\subseteq V_2$, every $s_i\in S$ is black and every $t_i\in T$ is white. For any two vertices $u$ and $v$, we use $u\sim v$ to denote a path that connects the endpoints $u$ and $v$. This notation allows $u=v$, and in such case, $u\sim v$ is simply a trivial path with one vertex. Furthermore, for any $v\in V(G)$, let $N(v)$ denote the set of neighbors of $v$ in $G$, and for any $X\subseteq V(G)$, let $N(X)$ denote the union of $N(v)$ for all $v\in X$. 

\begin{lemma}\label{lem:emptylayers}
Let $G_1,G_2,\dotsc,G_\ell$ be equitable Hamiltonian-laceable graphs with the same number of vertices, and let $G$ be a bipartite weld of $G_1,G_2,\dotsc,G_\ell$. Let $H$ be a subgraph of $G$ such that $H$ is a weld of $G_{\sigma_1},G_{\sigma_2},\dotsc,G_{\sigma_j}$ for some $\{\sigma_1,\sigma_2,\dotsc,\sigma_j\}\subseteq[\ell]$. If $H$ admits a $k$-PDPC with endpoints $S\cup T\subseteq V(H)$, then $G$ also admits a $k$-PDPC with endpoints $S\cup T$.% \st{Furthermore, each path in this $k$-PDPC of $G$ contains at least as many vertices as in the corresponding path in the $k$-PDPC of $H$.}
\end{lemma}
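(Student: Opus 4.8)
The plan is to begin with the given $k$-PDPC of $H$ and then absorb, one layer at a time, each copy $G_i$ with $i\in[\ell]\setminus\{\sigma_1,\dotsc,\sigma_j\}$ that lies outside $H$, rerouting a single existing path through that copy without disturbing the prescribed endpoints. Call these copies the \emph{empty layers}; since $S\cup T\subseteq V(H)$, none of the endpoints lie in them. Because a weld is composed of vertex-disjoint copies $G_1,\dotsc,G_\ell$, the empty layers are pairwise disjoint and disjoint from $V(H)$, so once we have covered every empty layer in addition to $V(H)$ we have covered all of $V(G)$.

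The key observation is that in a bipartite weld every matching edge joins a black vertex to a white vertex. Fix a path $P$ of the current cover and one of its edges $uv$, with $u$ black and $v$ white. Let $G_m$ be an empty layer, and let $u'$ and $v'$ be the matching partners of $u$ and $v$ in $G_m$; then $u'$ is white and $v'$ is black, and $uu'$ and $v'v$ are edges of $G$. Since $G_m$ is connected, the black/white coloring of $G$ restricts to the unique bipartition of $G_m$, and since $G_m$ is equitable and Hamiltonian-laceable there is a Hamiltonian path of $G_m$ with the opposite-colored endpoints $u'$ and $v'$. Replacing $uv$ in $P$ by the concatenation of $uu'$, this Hamiltonian path of $G_m$, and $v'v$ produces a longer path that contains all of $G_m$, uses only edges of $G$, keeps the endpoints of $P$ fixed, and is still a properly alternating bipartite path.

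To cover all empty layers, I would iterate this absorption, always inserting the next empty layer into (say) the single path $P_1$. Each $P_i$ has at least one edge because $s_i$ being black and $t_i$ being white force $s_i\neq t_i$; moreover each absorption only lengthens $P_1$, so it always retains an edge into which the next empty layer can be inserted. Every absorption covers a fresh layer while keeping the paths pairwise vertex-disjoint and leaving the endpoint set $S\cup T$ unchanged, so after the last empty layer is processed the paths cover $V(H)$ together with all empty layers, which is all of $V(G)$. This is the desired $k$-PDPC of $G$.

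I expect the only delicate point to be the color bookkeeping: verifying that the matching partners $u'$ and $v'$ receive opposite colors so that Hamiltonian-laceability of $G_m$ applies to exactly this pair, and confirming that the coloring inherited from $G$ is genuinely the bipartition of each connected $G_m$ so that laceability yields a path through \emph{all} of $G_m$. Disjointness, endpoint preservation, and termination then follow immediately from the layer partition and the fact that absorption strictly increases the number of covered vertices.
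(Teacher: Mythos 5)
Your proposal is correct and follows essentially the same approach as the paper: the paper likewise absorbs the layers outside $H$ by picking an edge $u_jv_j$ of one path, jumping via matching edges to opposite-colored partners in the next empty layer, and splicing in a Hamiltonian path guaranteed by equitable Hamiltonian-laceability (the paper chains each new layer into an edge of the previously inserted Hamiltonian path, which is just a particular instance of your iterative absorption). Your color bookkeeping and the connectivity-of-$G_m$ point are handled correctly, so there is no gap.
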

\begin{proof}
Without loss of generality, assume that $H$ is a weld of $G_1,G_2,\dotsc,G_j$. Let $S=\{s_1,s_2,\dotsc,s_k\}$ and $T=\{t_1,t_2,\dotsc,t_k\}$ be two disjoint $k$-subsets of $V(H)$, and let $\overline{P}_1,\overline{P}_2,\dotsc,\overline{P}_k$ be a $k$-PDPC of $H$. In particular, let $\overline{P}_1=s_1\sim u_jv_j\sim t_1$, where $u_j$ or $v_j$ are not required to be distinct from $s_1$ or $t_1$. Note that $u_j$ and $v_j$ are of opposite color, so the neighbors of $u_j$ and $v_j$ are also of opposite color. For each $j\leq i\leq\ell-1$, let $u_i^*,v_i^*\in V(G_{i+1})$ be the neighbors of $u_i$ and $v_i$, respectively, and let $\overline{P}_{i+1}=u_i^*\sim u_{i+1}v_{i+1}\sim v_i^*$ be a Hamiltonian path of $G_{i+1}$ that connects $u_i^*$ and $v_i^*$, where $u_{i+1}$ and $v_{i+1}$ are not required to be distinct from $u_i^*$ or $v_i^*$. Let
$$P_1=\underset{\text{from }\overline{P}_1}{\underbrace{s_1\sim u_j}}\underset{\text{from }\overline{P}_{j+1}}{\underbrace{u_j^*\sim u_{j+1}}}\underset{\text{from }\overline{P}_{j+2}}{\underbrace{u_{j+1}^*\sim u_{j+2}}}\;\dotsb\;\underset{\text{from }\overline{P}_\ell}{\underbrace{u_{\ell-1}^*\sim u_\ell}}\underset{\text{from }\overline{P}_\ell}{\underbrace{v_\ell\sim v_{\ell-1}^*}}\;\dotsb\;\underset{\text{from }\overline{P}_{j+1}}{\underbrace{v_{j+1}\sim v_j^*}}\underset{\text{from }\overline{P}_1}{\underbrace{v_j\sim t_1}}$$
and $P_i=\overline{P}_i$ for $2\leq i\leq k$. Then $P_1,P_2,\dotsc,P_k$ is a $k$-PDPC of $G$.%\st{, and each path contains at least as many vertices as in $P_1,P_2,\dotsc,P_k$, respectively}.
\end{proof}

Next, we are going to prove Proposition~\ref{prop:inductionDPC}, which provides the inductive step for the proof of Theorem~\ref{thm:transpositionDPC}.

\begin{proof}[Proof of Proposition~$\ref{prop:inductionDPC}$]
Let $S=\{s_1,s_2,\dotsc,s_n\}\subseteq V_1$ and $T=\{t_1,t_2,\dotsc,t_n\}\subseteq V_2$ be $n$-subsets of $V(G)$. For each $j\in[\ell]$, let $S_j=V(G_j)\cap S$, $T_j=V(G_j)\cap T$, $S_j'=\{s_i\in S_j:t_i\notin T_j\}$, $S_j''=\{s_i\in S_j:t_i\in T_j\}$, $T_j'=\{t_i\in T_j:s_i\notin S_j\}$, $T_j''=\{t_i\in T_j:s_i\in S_j\}$, and $w_j=|\{i\in[n]:V(G_j)\cap\{s_i,t_i\}\neq\emptyset\}|$. Note that $|S_j''|=|T_j''|$ and $|S_j|+|T_j'|=|S_j'|+|T_j|=w_j$. %$w_s,w_t,w:[\ell]\to[n]$ such that $w_s(j)=|V(G_j)\cap S|$, $w_t(j)=|V(G_j)\cap T|$, and $w(j)=|\{i\in[n]:V(G_j)\cap\{s_i,t_i\}\neq\emptyset\}|$ for all $j\in[\ell]$. 
Furthermore, let $\tau:V(G)\to[\ell]$ such that $v\in V(G_{\tau(v)})$ for all $v\in V(G)$. Due to condition~$(\ref{item:cond1})$ on $G_j$ for all $j\in[\ell]$, there are at least $4n-2$ vertices in $V(G_j)$; by condition~$(\ref{item:cond2})$, there are at least $2n-1$ black and $2n-1$ white vertices in $V(G_j)$. 

\begin{enumerate}[\textit{Case} $1$:]
\item\label{item:wj<=n-1} $w_j\leq n-1$ for all $j\in[\ell]$.

For each $i\in[n]$ such that $\tau(s_i)\neq\tau(t_i)$, we define $v_i,u_i\in V(G)$ inductively. Initially, let $\mathcal{V}=\mathcal{U}=\emptyset$. Let $j=\tau(s_i)$ and $j^*=\tau(t_i)$. We can define $v_i$ to be a white vertex in $V(G_j)\setminus\big(T_j\cup(\mathcal{V}\cap V(G_j))\cup N(S_{j^*}\cup(\mathcal{U}\cap V(G_{j^*}))\big)$ since
\begin{align*}
&\;\big|T_j\cup(\mathcal{V}\cap V(G_j))\cup\big(N(S_{j^*}\cup(\mathcal{U}\cap V(G_{j^*}))\cap V(G_j)\big)\big|\\
\leq&\;|T_j|+|\mathcal{V}\cap V(G_j)|+|S_{j^*}|+|\mathcal{U}\cap V(G_{j^*})|\\
\leq&\;|T_j|+|S_j'|-1+|S_{j^*}|+|T_{j^*}'|-1\\
=&\;w_j+w_{j^*}-2\\
\leq&\;2(n-1)-2\\
<&\;2n-1.
\end{align*}
Let $u_i\in V(G_{j^*})$ be the unique neighbor of $v_i$, and we update $\mathcal{V}:=\mathcal{V}\cup\{v_i\}$ and $\mathcal{U}:=\mathcal{U}\cup\{u_i\}$.

By condition~$(\ref{item:cond3})$ on each $G_j$ and Proposition~\ref{prop:differentk}$(\ref{item:ours})$, there is a $w_j$-PDPC in $G_j$ with endpoints $S_j'\cup\{u_i:t_i\in T_j'\}\cup S_j''$ and $\{v_i:s_i\in S_j'\}\cup T_j'\cup T_j''$, where paths are of the form $s_i\sim v_i$, $u_i\sim t_i$, or $s_i\sim t_i$. For each $i\in[n]$, let
$$P_i=\underset{G_{\tau(s_i)}}{\underbrace{s_i\sim v_i}}\underset{G_{\tau(t_i)}}{\underbrace{u_i\sim t_i}}$$
if $s_i\in S_{\tau(s_i)}'$ and let
$$P_i=\underset{G_{\tau(s_i)}}{\underbrace{s_i\sim t_i}}$$
if $s_i\in S_{\tau(s_i)}''$. Then $P_1,P_2,\dotsc,P_n$ form an $n$-PDPC of the weld of the graphs in $\{G_j:j\in[\ell]\text{ and }w_j>0\}$, and we obtain a desired $n$-PDPC of $G$ by Lemma~\ref{lem:emptylayers}.

%For each $j\in[\ell]$, note that $T_j'\cup T_j''$ contains at most $n-1-|S_j'|$ white vertices in $G_j$, while $N(S\setminus(S_j'\cup S_j''))\cap V(G_j)$ contains at most $n-|S_j'\cup S_j''|$ white vertices in $G_j$. Hence, for each $s_i\in S_j'$, we can define a distinct white vertex $v_i\in V(G_j)\setminus\big(T_j'\cup T_j''\cup(N(S\setminus(S_j'\cup S_j''))\cap V(G_j))\big)$ since $|S_j'|\leq2(n-1)-(n-1-|S_j'|)-(n-|S_j'\cup S_j''|)$. Next, for each $v_i$ defined, let $u_i$ be the unique black vertex in $N(v_i)\cap V(G_{\tau(t_i)})$. Note that $u_i\notin S_{\tau(t_i)}'\cup S_{\tau(t_i)}''$ due to the definition of $v_i$. %Similarly, for each $t_i\in T_j'$, we can define a distinct black vertex $u_i\in V(G_j)\setminus\big(S_j'\cup S_j''\cup(N(T\setminus(T_j'\cup T_j''))\cap V(G_j))\big)$. 
%By condition~$(\ref{item:cond3})$ on $G_j$, there is an $(n-1)$-PDPC in $G_j$ with endpoints $S_j'\cup\{u_i:t_i\in T_j'\}\cup S_j''$ and $\{v_i:s_i\in S_j'\}\cup T_j'\cup T_j''$.\\\\\\
%For each $j\in[\ell]$, there are at least $n-1$ black and $n-1$ white vertices in $V(G_j)\setminus(S\cup T)$. For each $i\in[n]$, if $\tau(s_i)\neq\tau(t_i)$, then let $v_i\in V(G_{\tau(s_i)})\setminus(S\cup T\cup\{v_1,v_2,\dotsc,v_{i-1}\})$ be a white vertex and $u_i\in V(G_{\tau(t_i)})$ be a neighbor of $v_i$ such that 
%Let $I=\{i\in[n]:\{s_i,t_i\}\subseteq V(G_j)\text{ for some }j\in[\ell]\}$, and let $I^\mathcal{C}=[n]\setminus I$. For each $i\in I^\mathcal{C}$, if let $j_{i1},j_{i2}\in[\ell]$ such that $s_i\in V(G_{j_{i1}})$ and $t_i\in V(G_{j_{i2}})$.

\item\label{item:STinVGj} There exists $j\in[\ell]$ such that $S\cup T\subseteq V(G_j)$.

Without loss of generality, assume that $S\cup T\subseteq V(G_1)$. By condition~$(\ref{item:cond3})$ on $G_1$, there is an $(n-1)$-PDPC $\overline{P}_1,\overline{P}_2,\dotsc,\overline{P}_{n-1}$ in $G_1$ with endpoints $\{s_1,s_2,\dotsc,s_{n-1}\}$ and $\{t_1,t_2,\dotsc,t_{n-1}\}$.

If there exists $i\in[n-1]$ such that the path $s_i\sim t_i$ contains both $s_n$ and $t_n$, then without loss of generality, assume that $\overline{P}_1$ is either $s_1\sim v_1s_n\sim t_nu_1\sim t_1$ or $s_1\sim u_1t_n\sim s_nv_1\sim t_1$, where $u_1$ is black, $v_1$ is white, and $u_1,v_1$ are not required to be distinct from $s_1,t_1$, respectively. Let $u_1^*,v_1^*\in V(G_2)$ be the unique neighbors of $u_1,v_1$, respectively. By condition~$(\ref{item:cond3})$ on $G_2$ and Proposition~\ref{prop:differentk}$(\ref{item:ours})$, there is a Hamiltonian path in $G_2$ with endpoints $v_1^*$ and $u_1^*$. Let $P_1$ be either
$$\underset{\overline{P}_1}{\underbrace{s_1\sim v_1}}\underset{G_2}{\underbrace{v_1^*\sim u_1^*}}\underset{\overline{P}_1}{\underbrace{u_1\sim t_1}}\quad\text{or}\quad\underset{\overline{P}_1}{\underbrace{s_1\sim u_1}}\underset{G_2}{\underbrace{u_1^*\sim v_1^*}}\underset{\overline{P}_1}{\underbrace{v_1\sim t_1}},$$
$P_i=\overline{P}_i$ for $2\leq i\leq n-1$, and
$$P_n=\underset{\overline{P}_1}{\underbrace{s_n\sim t_n}}.$$
Then $P_1,P_2,\dotsc,P_n$ form an $n$-PDPC of the weld of $G_1$ and $G_2$, and we obtain a desired $n$-PDPC of $G$ by Lemma~\ref{lem:emptylayers}.

If there does not exist $i\in[n-1]$ such that the path $s_i\sim t_i$ contains both $s_n$ and $t_n$, then without loss of generality, assume that $\overline{P}_1=s_1\sim u_1v_ns_nv_1\sim t_1$ and $\overline{P}_2=s_2\sim u_2t_nu_nv_2\sim t_2$, where $u_1,u_2,u_n$ are black, $v_1,v_2,v_n$ are white, and $u_1,v_1,u_2,v_2$ are not required to be distinct from $s_1,t_1,s_2,t_2$, respectively. Let $u_1^*,u_2^*,v_1^*,v_2^*\in V(G_2)$ and $u_n^*,v_n^*\in V(G_3)$ be the unique neighbors of $u_1,u_2,v_1,v_2,u_n,v_n$, respectively. By condition~$(\ref{item:cond3})$ on $G_2$ and $G_3$ and Proposition~\ref{prop:differentk}$(\ref{item:ours})$, there is a $2$-PDPC in $G_2$ with endpoints $\{v_1^*,v_2^*\}$ and $\{u_1^*,u_2^*\}$ and a Hamiltonian path in $G_3$ with endpoints $v_n^*$ and $u_n^*$. Let 
$$P_1=\underset{\overline{P}_1}{\underbrace{s_1\sim u_1}}\underset{G_2}{\underbrace{u_1^*\sim v_1^*}}\underset{\overline{P}_1}{\underbrace{v_1\sim t_1}},$$
$$P_2=\underset{\overline{P}_2}{\underbrace{s_2\sim u_2}}\underset{G_2}{\underbrace{u_2^*\sim v_2^*}}\underset{\overline{P}_2}{\underbrace{v_2\sim t_2}},$$
$P_i=\overline{P}_i$ for $3\leq i\leq n-1$, and 
$$P_n=\underset{\overline{P}_1}{\underbrace{s_nv_n}}\underset{G_3}{\underbrace{v_n^*\sim u_n^*}}\underset{\overline{P}_2}{\underbrace{u_nt_n}}.$$
Then $P_1,P_2,\dotsc,P_n$ form an $n$-PDPC of the weld of $G_1$, $G_2$, and $G_3$, and we obtain a desired $n$-PDPC of $G$ by Lemma~\ref{lem:emptylayers}.

\item\label{item:SinVG1TinVG2} There exist $j_1,j_2\in[\ell]$, where $j_1\neq j_2$, such that $S\subseteq V(G_{j_1})$ and $T\subseteq V(G_{j_2})$.

Without loss of generality, assume that $S\subseteq V(G_1)$ and $T\subseteq V(G_2)$. Let $v_1,v_2,\dotsc,v_{n-1}\in V(G_1)$ be distinct white vertices. By condition~$(\ref{item:cond3})$ on $G_1$, there is an $(n-1)$-PDPC $\overline{P}_1,\overline{P}_2,\dotsc,\overline{P}_{n-1}$ in $G_1$ with endpoints $\{s_1,s_2,\dotsc,s_{n-1}\}$ and $\{v_1,v_2,\dotsc,v_{n-1}\}$. Without loss of generality, assume that $\overline{P}_1$ contains $s_n$, i.e., $\overline{P}_1=s_1\sim v_ns_n\sim v_1$, where $v_n$ is white. Rename the vertices $v_1$ and $v_n$ as $v_n$ and $v_1$, respectively, so $\overline{P}_1=s_1\sim v_1s_n\sim v_n$. Let $v_1^*,v_2^*,\dotsc,v_{n-1}^*\in V(G_2)$ be the unique neighbors of $v_1,v_2,\dotsc,v_{n-1}$, respectively. By condition~$(\ref{item:cond3})$ on $G_2$, there is an $(n-1)$-PDPC $\widehat{P}_1,\widehat{P}_2,\dotsc,\widehat{P}_{n-1}$ in $G_2$ with endpoints $\{v_1^*,v_2^*,\dotsc,v_{n-1}^*\}$ and $\{t_1,t_2,\dotsc,t_{n-1}\}$. Again without loss of generality, assume that $\widehat{P}_1$ contains $t_n$, i.e., $\widehat{P}_1=v_1^*\sim u_0t_nu_nv_0\sim t_1$, where $u_0$ and $u_n$ are black, $v_0$ is white, and $u_0,v_0$ are not required to be distinct from $v_1^*,t_1$, respectively. Let $u_0^*,v_0^*\in V(G_3)$ and $u_n^*,v_n^*\in V(G_4)$ be the unique neighbors of $u_0,v_0,u_n,v_n$, respectively. By condition~$(\ref{item:cond3})$ on $G_3$ and $G_4$ together with Proposition~\ref{prop:differentk}$(\ref{item:ours})$, there is a Hamiltonian path of $G_3$ with endpoints $v_0^*$ and $u_0^*$ and a Hamiltonian path of $G_4$ with endpoints $v_n^*$ and $u_n^*$. Let 
$$P_1=\underset{\overline{P}_1}{\underbrace{s_1\sim v_1}}\underset{\widehat{P}_1}{\underbrace{v_1^*\sim u_0}}\underset{G_3}{\underbrace{u_0^*\sim v_0^*}}\underset{\widehat{P}_1}{\underbrace{v_0\sim t_1}},$$
$$P_i=\underset{\overline{P}_i}{\underbrace{s_i\sim v_i}}\underset{\widehat{P}_i}{\underbrace{v_i^*\sim t_i}}$$
for $2\leq i\leq n-1$, and 
$$P_n=\underset{\overline{P}_1}{\underbrace{s_n\sim v_n}}\underset{G_4}{\underbrace{v_n^*\sim u_n^*}}\underset{\widehat{P}_1}{\underbrace{u_nt_n}}.$$
Then $P_1,P_2,\dotsc,P_n$ form an $n$-PDPC of the weld of $G_1,G_2,G_3,G_4$, and we obtain a desired $n$-PDPC of $G$ by Lemma~\ref{lem:emptylayers}.

\item\label{item:SinVGj0TnotinVGj0} There exists $j_0\in[\ell]$ such that $S\subseteq V(G_{j_0})$ or $T\subseteq V(G_{j_0})$ but not both, and $w_j\leq n-1$ for all $j\in[\ell]\setminus\{j_0\}$.

Without loss of generality, assume that $S\subseteq V(G_1)$ and $t_n\notin V(G_1)$. Note that $T_1=T_1''$ and $T_j=T_j'$ for $2\leq j\leq\ell$. For every $s_i\in S_1'\setminus\{s_n\}$, let $v_i\in V(G_1)$ be a white vertex such that $\{v_i:s_i\in S_1'\setminus\{s_n\}\}\cup T_1$ is an $(n-1)$-subset of $V(G_1)$. By condition~$(\ref{item:cond3})$ on $G_1$, there is an $(n-1)$-PDPC $\overline{P}_1,\overline{P}_2,\dotsc,\overline{P}_{n-1}$ in $G_1$ with endpoints $\{s_1,s_2,\dotsc,s_{n-1}\}$ and $\{v_i:s_i\in S_1'\setminus\{s_n\}\}\cup T_1$, where paths are of the form $s_i\sim v_i$ or $s_i\sim t_i$. Without loss of generality, assume that the path $\overline{P}_1$ contains $s_n$.

If $\overline{P}_1=s_1\sim v_ns_n\sim v_1$ where $v_n$ is white, then rename the vertices $v_1$ and $v_n$ as $v_n$ and $v_1$, respectively, so $\overline{P}_1=s_1\sim v_1s_n\sim v_n$. For each $s_i\in S_1'$, let $v_i^*\in V(G_{\tau(t_i)})$ be the unique neighbor of $v_i$. By condition~$(\ref{item:cond3})$ on $G_j$ for $2\leq j\leq\ell$ and Proposition~\ref{prop:differentk}$(\ref{item:ours})$, there is a $w_j$-PDPC in $G_j$ with endpoints $\{v_i^*:t_i\in T_j\}$ and $T_j$. Let 
$$P_i=\underset{G_1}{\underbrace{s_i\sim v_i}}\underset{G_{\tau(t_i)}}{\underbrace{v_i^*\sim t_i}}$$
for each $s_i\in S_1'$ and $P_i=\overline{P}_i$ for each $s_i\in S_1''$. Then $P_1,P_2,\dotsc,P_n$ form an $n$-PDPC of the weld of the graphs in $\{G_j:j\in[\ell]\text{ and }w_j>0\}$, and we obtain a desired $n$-PDPC of $G$ by Lemma~\ref{lem:emptylayers}.

Otherwise, $\overline{P}_1=s_1\sim u_0v_ns_nv_0\sim t_1$, where $u_0$ is black, $v_0$ and $v_n$ are white, and $u_0,v_0$ are not required to be distinct from $s_1,t_1$, respectively. For each $s_i\in S_1'$, let $v_i^*\in V(G_{\tau(t_i)})$ be the unique neighbor of $v_i$. Since $|T\setminus T_1|<n\leq\ell-1$, we can assume without loss of generality that $T\cap V(G_\ell)=\emptyset$. Let $u_0^*,v_0^*\in V(G_\ell)$ be the unique neighbors of $u_0$ and $v_0$, respectively. By condition~$(\ref{item:cond3})$ on $G_\ell$ and Proposition~\ref{prop:differentk}$(\ref{item:ours})$, there is a Hamiltonian path in $G_\ell$ with endpoints $v_0^*$ and $u_0^*$. For each $s_i\in S_1'$, let $v_i^*\in V(G_{\tau(t_i)})$ be the unique neighbor of $v_i$. By condition~$(\ref{item:cond3})$ on $G_j$ for $2\leq j\leq\ell$ and Proposition~\ref{prop:differentk}$(\ref{item:ours})$, there is a $w_j$-PDPC in $G_j$ with endpoints $\{v_i^*:t_i\in T_j\}$ and $T_j$. Let 
$$P_1=\underset{\overline{P}_1}{\underbrace{s_1\sim u_0}}\underset{G_\ell}{\underbrace{u_0^*\sim v_0^*}}\underset{\overline{P}_1}{\underbrace{v_0\sim t_1}},$$
$$P_i=\underset{\overline{P}_i}{\underbrace{s_i\sim v_i}}\underset{G_{\tau(t_i)}}{\underbrace{v_i^*\sim t_i}}$$
for each $s_i\in S_1'\setminus\{s_n\}$, $P_i=\overline{P}_i$ for each $s_i\in S_1''\setminus\{s_1\}$, and
$$P_n=\underset{\overline{P}_1}{\underbrace{s_nv_n}}\underset{G_{\tau(t_n)}}{\underbrace{v_n^*\sim t_n}}.$$
Then $P_1,P_2,\dotsc,P_n$ form an $n$-PDPC of the weld of the graphs in $\{G_j:j\in[\ell]\text{ and }w_j>0\}\cup\{G_\ell\}$, and we obtain a desired $n$-PDPC of $G$ by Lemma~\ref{lem:emptylayers}.

\item There exists $j_0\in[\ell]$ such that $w_{j_0}=n$, $S\nsubseteq V(G_{j_0})$, $T\nsubseteq V(G_{j_0})$, and $w_j\leq n-1$ for all $j\in[\ell]\setminus\{j_0\}$.

Without loss of generality, assume that $w_1=n$. Since $|S_1|+|T_1|\geq n\geq3$, we have $\max(|S_1|,|T_1|)\geq2$. Without loss of generality, assume that $|T_1|\geq2$ and $s_n\in S_1'$. Note that $S_j=S_j'$ and $T_j=T_j'$ for $2\leq j\leq\ell$. Similar to \textit{Case}~$\ref{item:wj<=n-1}$, for each $i\in[n-1]$ such that $\tau(s_i)\neq\tau(t_i)$, we define $v_i,u_i\in V(G)$ inductively. Initially, let $\mathcal{V}=\mathcal{U}=\emptyset$. Let $j=\tau(s_i)$ and $j^*=\tau(t_i)$, where $1\in\{j,j^*\}$. We can define $v_i$ to be a white vertex in $V(G_j)\setminus\big(T_j\cup(\mathcal{V}\cap V(G_j))\cup N(S_{j^*}\cup(\mathcal{U}\cap V(G_{j^*}))\big)$ since
\begin{align*}
&\;\big|T_j\cup(\mathcal{V}\cap V(G_j))\cup\big(N(S_{j^*}\cup(\mathcal{U}\cap V(G_{j^*}))\cap V(G_j)\big)\big|\\
\leq&\;|T_j|+|\mathcal{V}\cap V(G_j)|+|S_{j^*}|+|\mathcal{U}\cap V(G_{j^*})|\\
\leq&\;\max(|T_1|+|S_1'|-2+|S_{j^*}|+|T_{j^*}|-1,\;|T_j|+|S_j|-1+|S_1|+|T_1'|-1)\\
%\leq&\;\max(|T_j|+|S_j'|-2+|S_{j^*}|+|T_{j^*}'|-1,\;|T_j|+|S_j'|-1+|S_{j^*}|+|T_{j^*}'|-2)\\
=&\;w_j+w_{j^*}-2\\
\leq&\;2n-3\\
<&\;2n-1.
\end{align*}
Let $u_i\in V(G_{j^*})$ be the unique neighbor of $v_i$, and we update $\mathcal{V}:=\mathcal{V}\cup\{v_i\}$ and $\mathcal{U}:=\mathcal{U}\cup\{u_i\}$.

By condition~$(\ref{item:cond3})$ on $G_1$, there is an $(n-1)$-PDPC $\overline{P}_1,\overline{P}_2,\dotsc,\overline{P}_{n-1}$ in $G_1$ with endpoints $(S_1'\setminus\{s_n\})\cup\{u_i:t_i\in T_1'\}\cup S_1''$ and $\{v_i:s_i\in S_1'\setminus\{s_n\}\}\cup T_1'\cup T_1''$, where paths are of the form $s_i\sim v_i$, $u_i\sim t_i$, or $s_i\sim t_i$. Without loss of generality, assume that $\overline{P}_1$ contains $s_n$. Then $\overline{P}_1$ is either $s_1\sim u_0v_ns_nv_0\sim v_1$, $u_1\sim u_0v_ns_nv_0\sim t_1$, or $s_1\sim u_0v_ns_nv_0\sim t_1$, where $u_0$ is black, $v_0$ and $v_n$ are white, $u_0$ is not required to be distinct from $s_1$ or $u_1$, and $v_0$ is not required to be distinct from $v_1$ or $t_1$.

If there exists $j\in[\ell]$ such that $w_j=0$, then assume without loss of generality that $w_\ell=0$. Let $u_0^*,v_0^*,v_n^*\in V(G_\ell)$ be the unique neighbors of $u_0,v_0,v_n$, respectively. Furthermore, we define $u_n$ to be a black vertex in $V(G_{\tau(t_n)})\setminus\big(S_{\tau(t_n)}\cup(\mathcal{U}\cap V(G_{\tau(t_n)}))\cup N(u_0^*)\big)$. Let $u_n^*\in V(G_\ell)$ be the unique neighbor of $u_n$. By condition~$(\ref{item:cond3})$ on each $G_j$ and Proposition~\ref{prop:differentk}$(\ref{item:ours})$, there is a $w_j$-PDPC in $G_j$ for $2\leq j\leq\ell-1$ with endpoints $S_j\cup\{u_i:t_i\in T_j\}$ and $\{v_i:s_i\in S_j\}\cup T_j$, where paths are of the form $s_i\sim v_i$ or $u_i\sim t_i$, and there is a $2$-PDPC in $G_\ell$ with endpoints $\{v_0^*,v_n^*\}$ and $\{u_0^*,u_n^*\}$. Let $P_1$ be either
$$\underset{\overline{P}_1}{\underbrace{s_1\sim u_0}}\underset{G_\ell}{\underbrace{u_0^*\sim v_0^*}}\underset{\overline{P}_1}{\underbrace{v_0\sim v_1}}\underset{G_{\tau(t_1)}}{\underbrace{u_1\sim t_1}},$$
$$\underset{G_{\tau(s_1)}}{\underbrace{s_1\sim v_1}}\underset{\overline{P}_1}{\underbrace{u_1\sim u_0}}\underset{G_\ell}{\underbrace{u_0^*\sim v_0^*}}\underset{\overline{P}_1}{\underbrace{v_0\sim t_1}},$$
or
$$\underset{\overline{P}_1}{\underbrace{s_1\sim u_0}}\underset{G_\ell}{\underbrace{u_0^*\sim v_0^*}}\underset{\overline{P}_1}{\underbrace{v_0\sim t_1}}.$$
Let
$$P_i=\underset{G_{\tau(s_i)}}{\underbrace{s_i\sim v_i}}\underset{G_{\tau(t_i)}}{\underbrace{u_i\sim t_i}}$$
if $s_i\in S_1'\setminus\{s_n\}$ or $t_i\in T_1'$, $P_i=\overline{P}_i$ if $s_i\in S_1''$, and
$$P_n=\underset{\overline{P}_1}{\underbrace{s_nv_n}}\underset{G_\ell}{\underbrace{v_n^*\sim u_n^*}}\underset{G_{\tau(t_n)}}{\underbrace{u_n\sim t_n}}.$$
Then $P_1,P_2,\dotsc,P_n$ form an $n$-PDPC of the weld of the graphs in $\{G_j:j\in[\ell]\text{ and }w_j>0\}\cup\{G_\ell\}$, and we obtain a desired $n$-PDPC of $G$ by Lemma~\ref{lem:emptylayers}.

If $w_j>0$ for all $j\in[\ell]$, then since $|(S\cup T)\setminus(S_1\cup T_1)|\leq n\leq\ell-1$, we know that $G_j$ contains exactly one vertex from $(S\cup T)\setminus(S_1\cup T_1)$ for each $2\leq j\leq\ell$. Furthermore, since $|T_1|\geq2$, there exist $2\leq j_1<j_2\leq\ell$ such that $T_{j_1}=T_{j_2}=\emptyset$, so we may assume without loss of generality that $T_2=\emptyset$ and $S_2=\{s_2\}$. Let $u_0^*\in V(G_2)$ and $v_0^*,v_n^*\in V(G_{\tau(t_n)})$ be the unique neighbors of $u_0,v_0,v_n$, respectively. Note that $u_0^*\neq v_2$ since $v_2$ is a neighbor of $u_2\in G_1$ and $u_2\neq u_0$. We define $x_0$ to be a black vertex in $V(G_2)\setminus\big(\{s_2\}\cup N(t_n)\big)$. Let $y_0\in V(G_{\tau(t_n)})$ be the unique neighbor of $x_0$. By condition~$(\ref{item:cond3})$ on each $G_j$ and Proposition~\ref{prop:differentk}$(\ref{item:ours})$, there is a $2$-PDPC in $G_2$ with endpoints $\{s_2,x_0\}$ and $\{v_2,u_0^*\}$, a $2$-PDPC in $G_{\tau(t_n)}$ with endpoints $\{v_0^*,v_n^*\}$ and $\{y_0,t_n\}$, and a Hamiltonian path in $G_j$ with endpoints $s_i$ and $v_i$ or endpoints $u_i$ and $t_i$ for $j\in[\ell]\setminus\{1,2,\tau(t_n)\}$. Let $P_1$ be either
$$\underset{\overline{P}_1}{\underbrace{s_1\sim u_0}}\underset{G_2}{\underbrace{u_0^*\sim x_0}}\underset{G_{\tau(t_n)}}{\underbrace{y_0\sim v_0^*}}\underset{\overline{P}_1}{\underbrace{v_0\sim v_1}}\underset{G_{\tau(t_1)}}{\underbrace{u_1\sim t_1}},$$
$$\underset{G_{\tau(s_1)}}{\underbrace{s_1\sim v_1}}\underset{\overline{P}_1}{\underbrace{u_1\sim u_0}}\underset{G_2}{\underbrace{u_0^*\sim x_0}}\underset{G_{\tau(t_n)}}{\underbrace{y_0\sim v_0^*}}\underset{\overline{P}_1}{\underbrace{v_0\sim t_1}},$$
or
$$\underset{\overline{P}_1}{\underbrace{s_1\sim u_0}}\underset{G_2}{\underbrace{u_0^*\sim x_0}}\underset{G_{\tau(t_n)}}{\underbrace{y_0\sim v_0^*}}\underset{\overline{P}_1}{\underbrace{v_0\sim t_1}}.$$
Let
$$P_i=\underset{G_{\tau(s_i)}}{\underbrace{s_i\sim v_i}}\underset{G_{\tau(t_i)}}{\underbrace{u_i\sim t_i}}$$
for $2\leq i\leq n-1$ and
$$P_n=\underset{\overline{P}_1}{\underbrace{s_nv_n}}\underset{G_{\tau(t_n)}}{\underbrace{v_n^*\sim t_n}}.$$
Then $P_1,P_2,\dotsc,P_n$ form an $n$-PDPC of $G$.

\item There exist $j_1,j_2\in[\ell]$, where $j_1\neq j_2$, such that $w_{j_1}=w_{j_2}=n$ and $S\nsubseteq V(G_{j_1})$.

Without loss of generality, assume that $w_1=w_2=n$ and $s_n\in S_1'$. Note that $S_j=S_j'$ and $T_j=T_j'$ for $j\in\{1,2\}$. Similar to \textit{Case}~$\ref{item:wj<=n-1}$, for each $i\in[n-1]$, we define $v_i,u_i\in V(G)$ inductively. Initially, let $\mathcal{V}=\mathcal{U}=\emptyset$. Let $j=\tau(s_i)$ and $j^*=\tau(t_i)$, where $\{j,j^*\}=\{1,2\}$. Let $v_i$ be a white vertex in $V(G_j)\setminus\big(T_j\cup(\mathcal{V}\cap V(G_j))\cup N(S_{j^*}\cup(\mathcal{U}\cap V(G_{j^*}))\big)$ and let $u_i\in V(G_{j^*})$ be the unique neighbor of $v_i$. Note that $\mathcal{V}\cap V(G_j)=N(\mathcal{U}\cap V(G_{j^*}))\cap V(G_j)$, so our definition of $v_i$ and $u_i$ is valid since
\begin{align*}
&\;\big|T_j\cup(\mathcal{V}\cap V(G_j))\cup\big(N(S_{j^*}\cup(\mathcal{U}\cap V(G_{j^*}))\cap V(G_j)\big)\big|\\
\leq&\;|T_j|+|\mathcal{V}\cap V(G_j)|+|S_{j^*}|\\
\leq&\;\max(|T_1|+|S_1|-2+|S_2|,\;|T_2|+|S_2|-1+|S_1|)\\
\leq&\;\max(n-2+n-1,\;n-1+n-1)\\
=&\;2n-2\\
<&\;2n-1.
\end{align*}
After each $v_i$ and $u_i$ are defined, we update $\mathcal{V}:=\mathcal{V}\cup\{v_i\}$ and $\mathcal{U}:=\mathcal{U}\cup\{u_i\}$.

By condition~$(\ref{item:cond3})$ on $G_1$, there is an $(n-1)$-PDPC $\overline{P}_1,\overline{P}_2,\dotsc,\overline{P}_{n-1}$ in $G_1$ with endpoints $(S_1\setminus\{s_n\})\cup\{u_i:t_i\in T_1\}$ and $\{v_i:s_i\in S_1\setminus\{s_n\}\}\cup T_1$, where paths are of the form $s_i\sim v_i$ or $u_i\sim t_i$. Without loss of generality, assume that $\overline{P}_1$ contains $s_n$.

If $\overline{P}_1=s_1\sim v_ns_n\sim v_1$ where $v_n$ is white, then rename the vertices $v_1,v_n,u_1$ as $v_n,v_1,u_n$, respectively, so $\overline{P}_1=s_1\sim v_1s_n\sim v_n$. By condition~$(\ref{item:cond3})$ on $G_2$, there is an $(n-1)$-PDPC $\widehat{P}_2,\widehat{P}_3,\dotsc,\widehat{P}_n$ in $G_2$ with endpoints $S_2\cup\{u_i:t_i\in T_2\setminus\{t_1\}\}$ and $\{v_i:s_i\in S_2\}\cup(T_2\setminus\{t_1\})$, where paths are of the form $s_i\sim v_i$ or $u_i\sim t_i$. Without loss of generality, assume that $\widehat{P}_2$ contains $t_1$. Let $\widehat{P}_2$ be either $s_2\sim x_2t_1u_1y_2\sim v_2$ or $u_2\sim x_2t_1u_1y_2\sim t_2$, where $u_1$ and $x_2$ are black, $y_2$ is white, $x_2$ is not required to be distinct from $s_2$ or $u_2$, and $y_2$ is not required to be distinct from $v_2$ or $t_2$. Let $u_1^*,v_1^*\in V(G_3)$ and $x_2^*,y_2^*\in V(G_4)$ be the unique neighbors of $u_1,v_1,x_2,y_2$, respectively. By condition~$(\ref{item:cond3})$ on $G_3$ and $G_4$ together with Proposition~\ref{prop:differentk}$(\ref{item:ours})$, there is a Hamiltonian path of $G_3$ with endpoints $v_1^*$ and $u_1^*$ and a Hamiltonian path of $G_4$ with endpoints $y_2^*$ and $x_2^*$. Let
$$P_1=\underset{\overline{P}_1}{\underbrace{s_1\sim v_1}}\underset{G_3}{\underbrace{v_1^*\sim u_1^*}}\underset{\widehat{P}_2}{\underbrace{u_1t_1}},$$
$P_2$ be either 
$$\underset{\widehat{P}_2}{\underbrace{s_2\sim x_2}}\underset{G_4}{\underbrace{x_2^*\sim y_2^*}}\underset{\widehat{P}_2}{\underbrace{y_2\sim v_2}}\underset{\overline{P}_2}{\underbrace{u_2\sim t_2}}\quad\text{or}\quad\underset{\overline{P}_2}{\underbrace{s_2\sim v_2}}\underset{\widehat{P}_2}{\underbrace{u_2\sim x_2}}\underset{G_4}{\underbrace{x_2^*\sim y_2^*}}\underset{\widehat{P}_2}{\underbrace{y_2\sim t_2}},$$
and
$$P_i=\underset{G_{\tau(s_i)}}{\underbrace{s_i\sim v_i}}\underset{G_{\tau(t_i)}}{\underbrace{u_i\sim t_i}}$$
for $3\leq i\leq n$. Then $P_1,P_2,\dotsc,P_n$ form an $n$-PDPC of the weld of $G_1,G_2,G_3,G_4$, and we obtain a desired $n$-PDPC of $G$ by Lemma~\ref{lem:emptylayers}.

It remains to consider that $\overline{P}_1=u_1\sim x_0v_ns_ny_0\sim t_1$, where $x_0$ is black, $v_n$ and $y_0$ are white, and $x_0,y_0$ are not required to be distinct from $u_1,t_1$, respectively. By condition~$(\ref{item:cond3})$ on $G_2$, there is an $(n-1)$-PDPC $\widehat{P}_1,\widehat{P}_2,\dotsc,\widehat{P}_{n-1}$ in $G_2$ with endpoints $S_2\cup\{u_i:t_i\in T_2\setminus\{t_n\}\}$ and $\{v_i:s_i\in S_2\}\cup(T_2\setminus\{t_n\})$, where paths are of the form $s_i\sim v_i$ or $u_i\sim t_i$. Let $\widehat{P}_\iota$ contain $t_n$ for some $\iota\in[n-1]$. Let $\widehat{P}_\iota$ be either $s_\iota\sim x_\iota t_nu_ny_\iota\sim v_\iota$ or $u_\iota\sim x_\iota t_nu_ny_\iota\sim t_\iota$, where $u_n$ and $x_\iota$ are black, $y_\iota$ is white, $x_\iota$ is not required to be distinct from $s_\iota$ or $u_\iota$, and $y_\iota$ is not required to be distinct from $v_\iota$ or $t_\iota$. Let $v_n^*,x_0^*,y_0^*\in V(G_3)$ and $u_n^*,x_\iota^*,y_\iota^*\in V(G_4)$ be the unique neighbors of $v_n,x_0,y_0,u_n,x_\iota,y_\iota$, respectively. Let $y_n$ be a white vertex in $V(G_3)\setminus\big(\{x_0^*\}\cup N(y_\iota^*)\big)$, and let $x_n\in V(G_4)$ be the unique neighbor of $y_n$. By condition~$(\ref{item:cond3})$ on $G_3$ and $G_4$ together with Proposition~\ref{prop:differentk}$(\ref{item:ours})$, there is a $2$-PDPC in $G_3$ with endpoints $\{v_n^*,y_0^*\}$ and $\{y_n,x_0^*\}$ and a $2$-PDPC in $G_4$ with endpoints $\{x_n,y_\iota^*\}$ and $\{u_n^*,x_\iota^*\}$. If $\iota=1$, then let
$$P_1=\underset{\widehat{P}_1}{\underbrace{s_1\sim x_1}}\underset{G_4}{\underbrace{x_1^*\sim y_1^*}}\underset{\widehat{P}_1}{\underbrace{y_1\sim v_1}}\underset{\overline{P}_1}{\underbrace{u_1\sim x_0}}\underset{G_3}{\underbrace{x_0^*\sim y_0^*}}\underset{\overline{P}_1}{\underbrace{y_0\sim t_1}},$$
$$P_i=\underset{G_{\tau(s_i)}}{\underbrace{s_i\sim v_i}}\underset{G_{\tau(t_i)}}{\underbrace{u_i\sim t_i}}$$
for $2\leq i\leq n-1$, and 
$$P_n=\underset{\overline{P}_1}{\underbrace{s_nv_n}}\underset{G_3}{\underbrace{v_n^*\sim y_n}}\underset{G_4}{\underbrace{x_n\sim u_n^*}}\underset{\widehat{P}_1}{\underbrace{u_nt_n}};$$
if $\iota\neq1$, then assuming without loss of generality that $\iota=2$, let
$$P_1=\underset{\widehat{P}_1}{\underbrace{s_1\sim v_1}}\underset{\overline{P}_1}{\underbrace{u_1\sim x_0}}\underset{G_3}{\underbrace{x_0^*\sim y_0^*}}\underset{\overline{P}_1}{\underbrace{y_0\sim t_1}},$$
$P_2$ be either
$$\underset{\widehat{P}_2}{\underbrace{s_2\sim x_2}}\underset{G_4}{\underbrace{x_2^*\sim y_2^*}}\underset{\widehat{P}_2}{\underbrace{y_2\sim v_2}}\underset{\overline{P}_2}{\underbrace{u_2\sim t_2}}\quad\text{or}\quad\underset{\overline{P}_2}{\underbrace{s_2\sim v_2}}\underset{\widehat{P}_2}{\underbrace{u_2\sim x_2}}\underset{G_4}{\underbrace{x_2^*\sim y_2^*}}\underset{\widehat{P}_2}{\underbrace{y_2\sim t_2}},$$
$$P_i=\underset{G_{\tau(s_i)}}{\underbrace{s_i\sim v_i}}\underset{G_{\tau(t_i)}}{\underbrace{u_i\sim t_i}}$$
for $3\leq i\leq n-1$, and 
$$P_n=\underset{\overline{P}_1}{\underbrace{s_nv_n}}\underset{G_3}{\underbrace{v_n^*\sim y_n}}\underset{G_4}{\underbrace{x_n\sim u_n^*}}\underset{\widehat{P}_2}{\underbrace{u_nt_n}}.$$
Then $P_1,P_2,\dotsc,P_n$ form an $n$-PDPC of the weld of $G_1,G_2,G_3,G_4$, and we obtain a desired $n$-PDPC of $G$ by Lemma~\ref{lem:emptylayers}.\qedhere
\end{enumerate}
\end{proof}

%Instead of proceeding in accordance to the flow of logic, we first present the proof of our main theorem by assuming the results of Proposition~\ref{prop:inductionDPC}.
Before we prove our main theorem, we provide an equivalent form of Theorem~\ref{thm:transpositionDPC}.
\theoremstyle{plain}
\newtheorem*{theoremmain}{Restatement of Theorem~\ref{thm:transpositionDPC}}
\begin{theoremmain}
Let $n$ be a positive integer, and let $G$ be a bipartite transposition-like graph of rank $n+1$ with partite sets $V_1$ and $V_2$. Assume that during the welding process to form $G$, every rank $1$ transposition-like graph that $G$ is built up from is either a single vertex or has an even number of vertices. Then for every choice of $n$-subsets $S\subseteq V_1$ and $T\subseteq V_2$, $G$ admits an $n$-PDPC.
\end{theoremmain}

\begin{proof}[Proof of Theorem~$\ref{thm:transpositionDPC}$]
%Let $G$ be a bipartite transposition-like graph satisfying the conditions given in the statement of the theorem. Further 
Let $G$ be a weld of $G_1,G_2,\dotsc,G_\ell$, where $\ell\geq n+1$ and each $G_i$ is a bipartite transposition-like graph of rank $n$ with the same number of vertices. Note that if $n=3$, then the number of vertices in each $G_j$ needs to be at least $4n-2=10$ in order to satisfy condition~$(\ref{item:cond1})$ in Proposition~\ref{prop:inductionDPC}, and if $n\geq4$, then the number of vertices in each $G_j$ will be at least $n!=(n-1)!n\geq6n>4n-2$. Hence, in view of Proposition~\ref{prop:inductionDPC}, it suffices to show the following statements.
\begin{enumerate}[$(a)$]
\item\label{item:baserank2} If $G$ is of rank $2$, then $G$ is Hamiltonian-laceable.
\item\label{item:baserank3} If $G$ is of rank $3$, then for every choice of $2$-subsets $S\subseteq V_1$ and $T\subseteq V_2$, $G$ admits a $2$-PDPC.
\item\label{item:baserank4} If $G$ is of rank $4$ and every rank $3$ transposition-like graph that $G$ is built up from has at most $8$ vertices, then for every choice of $3$-subsets $S\subseteq V_1$ and $T\subseteq V_2$, $G$ admits a $3$-PDPC.%If $G=\Gamma(S_4,\mathcal{T}_4)$ is the transposition graph of rank $4$, then for every choice of $3$-subsets $S\subseteq V_1$ and $T\subseteq V_2$, $G$ admits a $3$-PDPC, where each path in this $3$-PDPC contains at least four vertices.
\end{enumerate}

We begin by proving $(\ref{item:baserank2})$.

\begin{enumerate}[\textit{Case} $1$:]
\item $|V(G_j)|=1$ for all $j\in[\ell]$.

Note that $G$ is the complete graph $K_\ell$, which is bipartite if and only if $\ell=2$. Hence, $G=K_2$ is Hamiltonian-laceable.

\item $|V(G_j)|$ is even for all $j\in[\ell]$.

Let $S=\{s\}$ and $T=\{t\}$. If $s,t\in V(G_j)$ for some $j\in[\ell]$, then since $G_j$ is a bipartite rank $1$ transposition-like graph, by Definition~\ref{def:transpositionlike}, there exists a Hamiltonian path in $G_j$ with endpoints $s$ and $t$, and we obtain a desired Hamiltonian path of $G$ by Lemma~\ref{lem:emptylayers}. Otherwise, assume without loss of generality that $s\in V(G_1)$ and $t\in V(G_2)$. Let $v\in V(G_1)$ be a white vertex and let $u\in V(G_2)$ be the unique neighbor of $v$. By Definition~\ref{def:transpositionlike} on $G_1$ and $G_2$, there is a Hamiltonian path in $G_1$ with endpoints $s$ and $v$ and a Hamiltonian path in $G_2$ with endpoints $u$ and $t$. Then $s\sim vu\sim t$ is a Hamiltonian path of the weld of $G_1$ and $G_2$, and we obtain a desired Hamiltonian path of $G$ by Lemma~\ref{lem:emptylayers}.
\end{enumerate}

Next, we provide the proof of $(\ref{item:baserank3})$.%To prove $(\ref{item:baserank3})$, let $G$ be a weld of $G_1,G_2,\dotsc,G_\ell$, where $\ell\geq3$ and each $G_i$ is a rank $2$ transposition-like graph with the same number of vertices. By $(\ref{item:baserank2})$, each $G_i$ is a Hamiltonian-laceable graph.

\begin{enumerate}[\textit{Case} $1$:]
\item $w_j\leq1$ for all $j\in[\ell]$.

For each $i\in\{1,2\}$ such that $\tau(s_i)\neq\tau(t_i)$, let $j=\tau(s_i)$ and $j^*=\tau(t_i)$. Let $v_i$ be a white vertex in $V(G_j)$ and let $u_i\in V(G_{j^*})$ be the unique neighbor of $v_i$. By the result in part~$(\ref{item:baserank2})$ on each $G_j$, there is a Hamiltonian path in $G_j$ with endpoints $S_j'\cup\{u_i:t_i\in T_j'\}\cup S_j''$ and $\{v_i:s_i\in S_j'\}\cup T_j'\cup T_j''$, where paths are of the form $s_i\sim v_i$, $u_i\sim t_i$, or $s_i\sim t_i$. For each $i\in\{1,2\}$, let
$$P_i=\underset{G_{\tau(s_i)}}{\underbrace{s_i\sim v_i}}\underset{G_{\tau(t_i)}}{\underbrace{u_i\sim t_i}}$$
if $s_i\in S_{\tau(s_i)}'$ and let
$$P_i=\underset{G_{\tau(s_i)}}{\underbrace{s_i\sim t_i}}$$
if $s_i\in S_{\tau(s_i)}''$. Then $P_1,P_2$ form a $2$-PDPC of the weld of the graphs in $\{G_j:j\in[\ell]\text{ and }w_j>0\}$, and we obtain a desired $2$-PDPC of $G$ by Lemma~\ref{lem:emptylayers}.

\item There exists $j\in[\ell]$ such that $S\cup T\subseteq V(G_j)$.

Without loss of generality, assume that $S\cup T\subseteq V(G_1)$. By the result in part~$(\ref{item:baserank2})$ on $G_1$, there is a Hamiltonian path $\overline{P}_1$ in $G_1$ with endpoints $s_1$ and $t_1$. Then $\overline{P}_1$ is either $s_1\sim v_1s_2\sim t_2u_1\sim t_1$ or $s_1\sim u_1t_2\sim s_2v_1\sim t_1$, where $u_1$ is black, $v_1$ is white, and $u_1,v_1$ are not required to be distinct from $s_1,t_1$, respectively. Let $u_1^*,v_1^*\in V(G_2)$ be the unique neighbors of $u_1,v_1$, respectively. By the result in part~$(\ref{item:baserank2})$ on $G_2$, there is a Hamiltonian path in $G_2$ with endpoints $v_1^*$ and $u_1^*$. Let $P_1$ be either
$$\underset{\overline{P}_1}{\underbrace{s_1\sim v_1}}\underset{G_2}{\underbrace{v_1^*\sim u_1^*}}\underset{\overline{P}_1}{\underbrace{u_1\sim t_1}}\quad\text{or}\quad\underset{\overline{P}_1}{\underbrace{s_1\sim u_1}}\underset{G_2}{\underbrace{u_1^*\sim v_1^*}}\underset{\overline{P}_1}{\underbrace{v_1\sim t_1}}$$
and
$$P_2=\underset{\overline{P}_1}{\underbrace{s_2\sim t_2}}.$$
Then $P_1,P_2$ form a $2$-PDPC of the weld of $G_1$ and $G_2$, and we obtain a desired $2$-PDPC of $G$ by Lemma~\ref{lem:emptylayers}.

\item There exist $j_1,j_2\in[\ell]$, where $j_1\neq j_2$, such that $S\subseteq V(G_{j_1})$ and $T\subseteq V(G_{j_2})$.

Without loss of generality, assume that $S\subseteq V(G_1)$ and $T\subseteq V(G_2)$. Let $v_1\in V(G_1)$ be a white vertex. By the result in part~$(\ref{item:baserank2})$ on $G_1$, there is a Hamiltonian path $\overline{P}_1$ in $G_1$ with endpoints $s_1$ and $v_1$. Then $\overline{P}_1=s_1\sim v_2s_2\sim v_1$, where $v_2$ is white. Rename the vertices $v_1$ and $v_2$ as $v_2$ and $v_1$, respectively, so $\overline{P}_1=s_1\sim v_1s_2\sim v_2$. Let $v_1^*,v_2^*\in V(G_2)$ be the unique neighbors of $v_1,v_2$, respectively. By the result in part~$(\ref{item:baserank2})$ on $G_2$, there is a Hamiltonian path $\widehat{P}_1$ in $G_2$ with endpoints $v_1^*$ and $t_1$. Then $\widehat{P}_1$ is either $v_1^*\sim v_0v_2^*\sim t_2u_0\sim t_1$ or $v_1^*\sim u_0t_2\sim v_2^*v_0\sim t_1$, where $u_0$ is black, $v_0$ is white, and $u_0,v_0$ are not required to be distinct from $v_1^*,t_1$, respectively. Let $u_0^*,v_0^*\in V(G_3)$ be the unique neighbors of $u_0,v_0$, respectively. By the result in part~$(\ref{item:baserank2})$ on $G_3$, there is a Hamiltonian path in $G_3$ with endpoints $v_0^*$ and $u_0^*$. Let $P_1$ be either
$$\underset{\overline{P}_1}{\underbrace{s_1\sim v_1}}\underset{\widehat{P}_1}{\underbrace{v_1^*\sim v_0}}\underset{G_3}{\underbrace{v_0^*\sim u_0^*}}\underset{\widehat{P}_1}{\underbrace{u_0\sim t_1}}\quad\text{or}\quad\underset{\overline{P}_1}{\underbrace{s_1\sim v_1}}\underset{\widehat{P}_1}{\underbrace{v_1^*\sim u_0}}\underset{G_3}{\underbrace{u_0^*\sim v_0^*}}\underset{\widehat{P}_1}{\underbrace{v_0\sim t_1}}$$
and
$$P_2=\underset{\overline{P}_1}{\underbrace{s_2\sim v_2}}\underset{\widehat{P}_1}{\underbrace{v_2^*\sim t_2}}.$$
Then $P_1,P_2$ form a $2$-PDPC of the weld of $G_1,G_2,G_3$, and we obtain a desired $2$-PDPC of $G$ by Lemma~\ref{lem:emptylayers}.

\item There exists $j_0\in[\ell]$ such that $S\subseteq V(G_{j_0})$ or $T\subseteq V(G_{j_0})$ but not both, and $w_j\leq1$ for all $j\in[\ell]\setminus\{j_0\}$.

Without loss of generality, assume that $S\subseteq V(G_1)$ and $t_2\notin V(G_1)$. If $t_1\notin V(G_1)$, then without loss of generality, assume that $t_1\in V(G_2)$ and $t_2\in V(G_3)$. Let $v_1\in V(G_1)$ be a white vertex. By the result in part~$(\ref{item:baserank2})$ on $G_1$, there is a Hamiltonian path $\overline{P}_1$ in $G_1$ with endpoints $s_1$ and $v_1$. Then $\overline{P}_1=s_1\sim v_2s_2\sim v_1$, where $v_2$ is white. Rename the vertices $v_1$ and $v_2$ as $v_2$ and $v_1$, respectively, so $\overline{P}_1=s_1\sim v_1s_2\sim v_2$. Let $v_1^*\in V(G_2)$ and $v_2^*\in V(G_3)$ be the unique neighbors of $v_1,v_2$, respectively. By the result in part~$(\ref{item:baserank2})$ on $G_2$ and $G_3$, there is a Hamiltonian path in $G_2$ with endpoints $v_1^*$ and $t_1$ and a Hamiltonian path in $G_3$ with endpoints $v_2^*$ and $t_2$.  Let
$$P_1=\underset{\overline{P}_1}{\underbrace{s_1\sim v_1}}\underset{G_2}{\underbrace{v_1^*\sim t_1}}$$
and
$$P_2=\underset{\overline{P}_1}{\underbrace{s_2\sim v_2}}\underset{G_3}{\underbrace{v_2^*\sim t_2}}.$$
Then $P_1,P_2$ form a $2$-PDPC of the weld of $G_1,G_2,G_3$, and we obtain a desired $2$-PDPC of $G$ by Lemma~\ref{lem:emptylayers}.

If $t_1\in V(G_1)$, then without loss of generality, assume that $t_2\in V(G_2)$. By the result in part~$(\ref{item:baserank2})$ on $G_1$, there is a Hamiltonian path $\overline{P}_1$ in $G_1$ with endpoints $s_1$ and $t_1$. Let $\overline{P}_1=s_1\sim u_1v_2s_2v_1\sim t_1$, where $u_1$ is black, $v_1$ is white, and $u_1,v_1$ are not required to be distinct from $s_1,t_1$, respectively. Let $u_1^*,v_1^*\in V(G_3)$ and $v_2^*\in V(G_2)$ be the unique neighbors of $u_1,v_1,v_2$, respectively. By the result in part~$(\ref{item:baserank2})$ on $G_2$ and $G_3$, there is a Hamiltonian path in $G_2$ with endpoints $v_2^*$ and $t_2$ and a Hamiltonian path in $G_3$ with endpoints $v_1^*$ and $u_1^*$.  Let
$$P_1=\underset{\overline{P}_1}{\underbrace{s_1\sim u_1}}\underset{G_3}{\underbrace{u_1^*\sim v_1^*}}\underset{\overline{P}_1}{\underbrace{v_1\sim t_1}}$$
and 
$$P_2=\underset{\overline{P}_1}{\underbrace{s_2v_2}}\underset{G_2}{\underbrace{v_2^*\sim t_2}}.$$
Then $P_1,P_2$ form a $2$-PDPC of the weld of $G_1,G_2,G_3$, and we obtain a desired $2$-PDPC of $G$ by Lemma~\ref{lem:emptylayers}.

\item There exists $j_0\in[\ell]$ such that $w_{j_0}=2$, $S\not\subseteq V(G_{j_0})$, $T\not\subseteq V(G_{j_0})$, and $w_j\leq1$ for all $j\in[\ell]\setminus\{j_0\}$.

Without loss of generality, assume that $\{s_1,t_2\}\subseteq V(G_1)$, $t_1\in V(G_2)$, and $s_2\in V(G_3)$. If $|V(G_j)|=2$ for all $j\in[\ell]$, then let $V(G_2)=\{u,t_1\}$ and $V(G_3)=\{s_2,v\}$, where $u$ is black and $v$ is white. Let $P_1=s_1vut_1$ and $P_2=s_2t_2$. Then $P_1,P_2$ form a $2$-PDPC of the weld of $G_1,G_2,G_3$, and we obtain a desired $2$-PDPC of $G$ by Lemma~\ref{lem:emptylayers}.

If $|V(G_j)|>2$ for all $j\in[\ell]$, then by the result in part~$(\ref{item:baserank2})$ on $G_1$, there is a Hamiltonian path $\overline{P}_1$ in $G_1$ with endpoints $s_1$ and $t_2$. Let $\overline{P}_1=s_1\sim vu\sim t_2$, where $u$ is black and $v$ is white. Let $u^*\in V(G_3)$ and $v^*\in V(G_2)$ be the unique neighbors of $u$ and $v$, respectively. By the result in part~$(\ref{item:baserank2})$ on $G_2$ and $G_3$, there is a Hamiltonian path in $G_2$ with endpoints $v^*$ and $t_1$ and a Hamiltonian path in $G_3$ with endpoints $s_2$ and $u^*$.  Let 
$$P_1=\underset{\overline{P}_1}{\underbrace{s_1\sim v}}\underset{G_2}{\underbrace{v^*\sim t_1}}$$
and 
$$P_2=\underset{G_3}{\underbrace{s_2\sim u^*}}\underset{\overline{P}_1}{\underbrace{u\sim t_2}}.$$ Then $P_1,P_2$ form a $2$-PDPC of the weld of $G_1,G_2,G_3$, and we obtain a desired $2$-PDPC of $G$ by Lemma~\ref{lem:emptylayers}.

\item There exist $j_1,j_2\in[\ell]$, where $j_1\neq j_2$, such that $w_{j_1}=w_{j_2}=2$ and $S\not\subseteq V(G_{j_1})$.

Without loss of generality, assume that $\{s_1,t_2\}\subseteq V(G_1)$ and $\{s_2,t_1\}\subseteq V(G_2)$. Let $u_1\in V(G_1)$ be the unique neighbor of $t_1$, where $u_1$ is not required to be distinct from $s_1$. By the result in part~$(\ref{item:baserank2})$ on $G_1$ and $G_2$, there is a Hamiltonian path $\overline{P}_1$ in $G_1$ with endpoints $s_1$ and $t_2$ and a Hamiltonian path $\overline{P}_2$ in $G_2$ with endpoints $s_2$ and $t_1$. Let $\overline{P}_1=s_1\sim u_1v_2\sim t_2$ and $\overline{P}_2=s_2\sim u_2t_1$, where $u_2$ is black, $v_2$ is white, and $u_2,v_2$ are not required to be distinct from $s_2,t_2$, respectively. Let $u_2^*,v_2^*\in V(G_3)$ be the unique neighbors of $u_2,v_2$, respectively. By the result in part~$(\ref{item:baserank2})$ on $G_3$, there is a Hamiltonian path in $G_3$ with endpoints $v_2^*$ and $u_2^*$. Let
$$P_1=\underset{\overline{P}_1}{\underbrace{s_1\sim u_1}}\underset{\overline{P}_2}{\underbrace{t_1}}$$ and
$$P_2=\underset{\overline{P}_2}{\underbrace{s_2\sim u_2}}\underset{G_3}{\underbrace{u_2^*\sim v_2^*}}\underset{\overline{P}_1}{\underbrace{v_2\sim t_2}}.$$
Then $P_1,P_2$ form a $2$-PDPC of the weld of $G_1,G_2,G_3$, and we obtain a desired $2$-PDPC of $G$ by Lemma~\ref{lem:emptylayers}.
\end{enumerate}

Lastly, we prove $(\ref{item:baserank4})$. Since $G_j$ is a bipartite transposition-like graph of rank $3$ for all $j\in[\ell]$, there are at least $3!=6$ vertices in $V(G_j)$. By Lemma~\ref{lem:bipartiteequitable}, there are at least $3$ black and $3$ white vertices in $V(G_j)$.

\begin{enumerate}[\textit{Case} $1$:]
\item $w_j\leq2$ for all $j\in[\ell]$.

The proof of Case~$\ref{item:wj<=n-1}$ in Proposition~\ref{prop:inductionDPC} applies due to $2(3-1)-2=2<3$ and the result in part~$(\ref{item:baserank3})$ on each $G_j$.

\item There exists $j\in[\ell]$ such that $S\cup T\subseteq V(G_j)$.

The proof of Case~$\ref{item:STinVGj}$ in Proposition~\ref{prop:inductionDPC} applies due to the result in part~$(\ref{item:baserank3})$ on each $G_j$.

\item There exist $j_1,j_2\in[\ell]$, where $j_1\neq j_2$, such that $S\subseteq V(G_{j_1})$ and $T\subseteq V(G_{j_2})$.

The proof of Case~$\ref{item:SinVG1TinVG2}$ in Proposition~\ref{prop:inductionDPC} applies due to the result in part~$(\ref{item:baserank3})$ on each $G_j$.

\item There exists $j_0\in[\ell]$ such that $S\subseteq V(G_{j_0})$ or $T\subseteq V(G_{j_0})$ but not both, and $w_j\leq2$ for all $j\in[\ell]\setminus\{j_0\}$.

The proof of Case~$\ref{item:SinVGj0TnotinVGj0}$ in Proposition~\ref{prop:inductionDPC} applies due to the result in part~$(\ref{item:baserank3})$ on each $G_j$.

\item There exists $j_0\in[\ell]$ such that $w_{j_0}=3$, $S\nsubseteq V(G_{j_0})$, $T\nsubseteq V(G_{j_0})$, and $w_j\leq2$ for all $j\in[\ell]\setminus\{j_0\}$.

Without loss of generality, assume that $\{s_1,s_2,t_3\}\subseteq V(G_1)$.

\begin{enumerate}[\textit{Case $5.$}$1$:]
\item $\{t_1,t_2\}\cap V(G_1)\neq\emptyset$.

Without loss of generality, assume that $t_1\in V(G_1)$ and $t_2\in V(G_2)$. By the result in part~$(\ref{item:baserank3})$ on $G_1$, there is a $2$-PDPC $\overline{P}_1,\overline{P}_2$ in $G_1$ with endpoints $\{s_1,s_2\}$ and $\{t_1,t_3\}$. Let $\overline{P}_2=s_2v_3\sim t_3$, where $v_3$ is white and $v_3$ is not required to be distinct from $t_3$. If $s_3\in V(G_2)$, then let $v_3^*\in V(G_2)$ be the unique neighbor of $v_3$, where $v_3^*$ is not required to be distinct from $s_3$. By the result in part~$(\ref{item:baserank3})$ on $G_2$ together with Proposition~\ref{prop:differentk}$(\ref{item:ours})$, there is a Hamiltonian path $\widehat{P}_2$ of $G_2$ with endpoints $s_3$ and $t_2$. Let $\widehat{P}_2=s_3\sim v_3^*v_2\sim t_2$, where $v_2$ is white and $v_2$ is not required to be distinct from $t_2$. Let $u_2^*,v_2^*\in V(G_3)$ be the unique neighbors of $s_2,v_2$, respectively. By the result in part~$(\ref{item:baserank3})$ on $G_3$ together with Proposition~\ref{prop:differentk}$(\ref{item:ours})$, there is a Hamiltonian path of $G_3$ with endpoints $v_2^*$ and $u_2^*$. Let $P_1=\overline{P}_1$, 
$$P_2=\underset{\overline{P}_2}{\underbrace{s_2}}\underset{G_3}{\underbrace{u_2^*\sim v_2^*}}\underset{\widehat{P}_2}{\underbrace{v_2\sim t_2}},$$
and
$$P_3=\underset{\widehat{P}_2}{\underbrace{s_3\sim v_3^*}}\underset{\overline{P}_2}{\underbrace{v_3\sim t_3}}.$$
Then $P_1,P_2,P_3$ form a $3$-PDPC of the weld of $G_1,G_2,G_3$, and we obtain a desired $3$-PDPC of $G$ by Lemma~\ref{lem:emptylayers}.

If $s_3\notin V(G_2)$, then assume without loss of generality that $s_3\in V(G_3)$. Let $v_2,v_3^*\in V(G_2)$ be the unique neighbors of $s_2,v_3$, respectively, where $v_2$ is not required to be distinct from $t_2$. By the result in part~$(\ref{item:baserank3})$ on $G_2$ together with Proposition~\ref{prop:differentk}$(\ref{item:ours})$, there is a Hamiltonian path $\widehat{P}_2$ of $G_2$ with endpoints $v_3^*$ and $t_2$. Let $\widehat{P}_2=v_3^*\sim u_3v_2\sim t_2$, where $u_3$ is black and $u_3$ is not required to be distinct from $v_3^*$. Let $u_3^*\in V(G_3)$ be the unique neighbor of $u_3$. By the result in part~$(\ref{item:baserank3})$ on $G_3$ together with Proposition~\ref{prop:differentk}$(\ref{item:ours})$, there is a Hamiltonian path of $G_3$ with endpoints $s_3$ and $u_3^*$. Let $P_1=\overline{P}_1$,
$$P_2=\underset{\overline{P}_2}{\underbrace{s_2}}\underset{\widehat{P}_2}{\underbrace{v_2\sim t_2}},$$
and
$$P_3=\underset{G_3}{\underbrace{s_3\sim u_3^*}}\underset{\widehat{P}_2}{\underbrace{u_3\sim v_3^*}}\underset{\overline{P}_2}{\underbrace{v_3\sim t_3}}.$$
Then $P_1,P_2,P_3$ form a $3$-PDPC of the weld of $G_1,G_2,G_3$, and we obtain a desired $3$-PDPC of $G$ by Lemma~\ref{lem:emptylayers}.

\item $\{t_1,t_2\}\cap V(G_1)=\emptyset$.

Without loss of generality, assume that $s_3\in V(G_2)$. If $\{t_1,t_2\}\cap V(G_2)\neq\emptyset$, then assume without loss of generality that $t_1\in V(G_2)$ and $t_2\in V(G_3)$. Let $v_1\in V(G_1)\setminus(\{t_3\}\cup N(s_3))$ be a white vertex and let $u_1\in V(G_2)$ be the unique neighbor of $v_1$. By the result in part~$(\ref{item:baserank3})$ on $G_1$, there is a $2$-PDPC $\overline{P}_1,\overline{P}_2$ in $G_1$ with endpoints $\{s_1,s_2\}$ and $\{v_1,t_3\}$. Let $\overline{P}_2=s_2v_3\sim t_3$, where $v_3$ is white and $v_3$ is not required to be distinct from $t_3$. Let $v_3^*\in V(G_3)$ be the unique neighbor of $v_3$. Let $u_3\in V(G_3)\setminus(\{v_3^*\}\cup N(t_1))$ be a black vertex and let $u_3^*\in V(G_2)$ be the unique neighbor of $u_3$. By the result in part~$(\ref{item:baserank3})$ on $G_2$, there is a $2$-PDPC in $G_2$ with endpoints $\{u_1,s_3\}$ and $\{t_1,u_3^*\}$. Furthermore, by the result in part~$(\ref{item:baserank3})$ on $G_3$ together with Proposition~\ref{prop:differentk}$(\ref{item:ours})$, there is a Hamiltonian path $\widehat{P}_2$ of $G_3$ with endpoints $v_3^*$ and $t_2$. Let $\widehat{P}_2=v_3^*\sim u_3v_2\sim t_2$, where $v_2$ is white and $v_2$ is not required to be distinct from $t_2$. Let $u_2^*,v_2^*\in V(G_4)$ be the unique neighbors of $s_2,v_2$, respectively. By the result in part~$(\ref{item:baserank3})$ on $G_4$ together with Proposition~\ref{prop:differentk}$(\ref{item:ours})$, there is a Hamiltonian path of $G_4$ with endpoints $v_2^*$ and $u_2^*$. Let 
$$P_1=\underset{\overline{P}_1}{\underbrace{s_1\sim v_1}}\underset{G_2}{\underbrace{u_1\sim t_1}},$$ 
$$P_2=\underset{\overline{P}_2}{\underbrace{s_2}}\underset{G_4}{\underbrace{u_2^*\sim v_2^*}}\underset{\widehat{P}_2}{\underbrace{v_2\sim t_2}},$$
and
$$P_3=\underset{G_2}{\underbrace{s_3\sim u_3^*}}\underset{\widehat{P}_2}{\underbrace{u_3\sim v_3^*}}\underset{\overline{P}_2}{\underbrace{v_3\sim t_3}}.$$
Then $P_1,P_2,P_3$ form a $3$-PDPC of the weld of $G_1,G_2,G_3,G_4$, and we obtain a desired $3$-PDPC of $G$ by Lemma~\ref{lem:emptylayers}.

If $\{t_1,t_2\}\cap V(G_2)=\emptyset$, then without loss of generality, we have either $\{t_1,t_2\}\subseteq V(G_3)$, or $t_1\in V(G_3)$ and $t_2\in V(G_4)$. If $\{t_1,t_2\}\subseteq V(G_3)$, then let $v_1\in V(G_1)\setminus\{t_3\}$ be a white vertex, and let $u_1\in V(G_3)$ be the unique neighbor of $v_1$. By the result in part~$(\ref{item:baserank3})$ on $G_1$, there is a $2$-PDPC $\overline{P}_1,\overline{P}_2$ in $G_1$ with endpoints $\{s_1,s_2\}$ and $\{v_1,t_3\}$. Let $\overline{P}_2=s_2v_3\sim t_3$, where $v_3$ is white and $v_3$ is not required to be distinct from $t_3$. Let $v_3^*\in V(G_3)$ be the unique neighbor of $v_3$. By the result in part~$(\ref{item:baserank3})$ on $G_3$, there is a $2$-PDPC $\widehat{P}_1,\widehat{P}_2$ in $G_3$ with endpoints $\{u_1,v_3^*\}$ and $\{t_1,t_2\}$. Let $\widehat{P}_3=v_3^*\sim u_3t_2$, where $u_3$ is black and $u_3$ is not required to be distinct from $v_3^*$. Let $u_3^*\in V(G_2)$ and $v_2,u_2\in V(G_4)$ be the unique neighbors of $u_3,s_2,t_2$, respectively. By the result in part~$(\ref{item:baserank3})$ on $G_2$ and $G_4$ together with Proposition~\ref{prop:differentk}$(\ref{item:ours})$, there is a Hamiltonian path of $G_2$ with endpoints $s_3$ and $u_3^*$ and a Hamiltonian path of $G_4$ with endpoints $u_2$ and $v_2$. Let
$$P_1=\underset{\overline{P}_1}{\underbrace{s_1\sim v_1}}\underset{\widehat{P}_1}{\underbrace{u_1\sim t_1}},$$
$$P_2=\underset{\overline{P}_2}{\underbrace{s_2}}\underset{G_4}{\underbrace{v_2\sim u_2}}\underset{\widehat{P}_3}{\underbrace{t_2}},$$
and
$$P_3=\underset{G_2}{\underbrace{s_3\sim u_3^*}}\underset{\widehat{P}_3}{\underbrace{u_3\sim v_3^*}}\underset{\overline{P}_2}{\underbrace{v_3\sim t_3}}.$$
Then $P_1,P_2,P_3$ form a $3$-PDPC of the weld of $G_1,G_2,G_3,G_4$, and we obtain a desired $3$-PDPC of $G$ by Lemma~\ref{lem:emptylayers}.

If $t_1\in V(G_3)$ and $t_2\in V(G_4)$, then $v_1\in V(G_1)\setminus\{t_3\}$ be a white vertex, and let $u_1\in V(G_3)$ be the unique neighbor of $v_1$. By the result in part~$(\ref{item:baserank3})$ on $G_1$, there is a $2$-PDPC $\overline{P}_1,\overline{P}_2$ in $G_1$ with endpoints $\{s_1,s_2\}$ and $\{v_1,t_3\}$. Let $\overline{P}_2=s_2v_3\sim t_3$, where $v_3$ is white and $v_3$ is not required to be distinct from $t_3$. Let $v_2,v_3^*\in V(G_4)$ be the unique neighbor of $s_2,v_3$, respectively, where $v_2$ is not required to be distinct from $t_2$. By the result in part~$(\ref{item:baserank3})$ on $G_4$ together with Proposition~\ref{prop:differentk}$(\ref{item:ours})$, there is a Hamiltonian path $\widehat{P}_2$ in $G_4$ with endpoints $v_3^*$ and $t_2$. Let $\widehat{P}_2=v_3^*\sim u_3v_2\sim t_2$, where $u_3$ is black and $u_3$ is not required to be distinct from $v_3^*$. Let $u_3^*\in V(G_2)$ be the unique neighbor of $u_3$. By the result in part~$(\ref{item:baserank3})$ on $G_2$ and $G_3$ together with Proposition~\ref{prop:differentk}$(\ref{item:ours})$, there is a Hamiltonian path in $G_2$ with endpoints $s_3$ and $u_3^*$ and a Hamiltonian path in $G_3$ with endpoints $u_1$ and $t_1$. Let
$$P_1=\underset{\overline{P}_1}{\underbrace{s_1\sim v_1}}\underset{G_3}{\underbrace{u_1\sim t_1}},$$ $$P_2=\underset{\overline{P}_2}{\underbrace{s_2}}\underset{\widehat{P}_2}{\underbrace{v_2\sim t_2}},$$
and
$$P_3=\underset{G_2}{\underbrace{s_3\sim u_3^*}}\underset{\widehat{P}_2}{\underbrace{u_3\sim v_3^*}}\underset{\overline{P}_2}{\underbrace{v_3\sim t_3}}.$$
Then $P_1,P_2,P_3$ form a $3$-PDPC of the weld of $G_1,G_2,G_3,G_4$, and we obtain a desired $3$-PDPC of $G$ by Lemma~\ref{lem:emptylayers}.
\end{enumerate}

\item There exist $j_1,j_2\in[\ell]$, where $j_1\neq j_2$, such that $w_{j_1}=w_{j_2}=3$ and $S\nsubseteq V(G_{j_1})$.

Without loss of generality, assume that $\{s_1,s_2,t_3\}\subseteq V(G_1)$ and $\{s_3,t_1,t_2\}\subseteq V(G_2)$. Let $v_1\in V(G_1)\setminus(\{t_3\}\cup N(s_3))$ be a white vertex and let $u_1\in V(G_2)$ be the unique neighbor of $v_1$. By the result in part~$(\ref{item:baserank3})$ on $G_1$ and $G_2$, there is a $2$-PDPC $\overline{P}_1,\overline{P}_2$ in $G_1$ with endpoints $\{s_1,s_2\}$ and $\{v_1,t_3\}$ and a $2$-PDPC $\widehat{P}_1,\widehat{P}_2$ in $G_2$ with endpoints $\{u_1,s_3\}$ and $\{t_1,t_2\}$. Let $\overline{P_2}=s_2v_3\sim t_3$ and $\widehat{P}_2=s_3\sim u_3t_2$, where $u_3$ is black, $v_3$ is white, and $u_3,v_3$ are not required to be distinct from $s_3,t_3$, respectively. Let $v_2,u_2\in V(G_3)$ be the unique neighbors of $s_2,t_2$, respectively, and let $u_3^*,v_3^*\in V(G_4)$ be the unique neighbors of $u_3,v_3$, respectively. By the result in part~$(\ref{item:baserank3})$ on $G_3$ and $G_4$ together with Proposition~\ref{prop:differentk}$(\ref{item:ours})$, there is a Hamiltonian path of $G_3$ with endpoints $u_2$ and $v_2$ and a Hamiltonian path of $G_4$ with endpoints $v_3^*$ and $u_3^*$. Let
$$P_1=\underset{\overline{P}_1}{\underbrace{s_1\sim v_1}}\underset{\widehat{P}_1}{\underbrace{u_1\sim t_1}},$$
$$P_2=\underset{\overline{P}_2}{\underbrace{s_2}}\underset{G_3}{\underbrace{v_2\sim u_2}}\underset{\widehat{P}_2}{\underbrace{t_2}},$$
and
$$P_3=\underset{\widehat{P}_2}{\underbrace{s_3\sim u_3}}\underset{G_4}{\underbrace{u_3^*\sim v_3^*}}\underset{\overline{P}_2}{\underbrace{v_3\sim t_3}}.$$ Then $P_1,P_2,P_3$ form a $3$-PDPC of the weld of $G_1,G_2,G_3,G_4$, and we obtain a desired $3$-PDPC of $G$ by Lemma~\ref{lem:emptylayers}.\qedhere
\end{enumerate}
\end{proof}

\section{Acknowledgements}

These results are based on work supported by the National Science Foundation under grant numbered MPS-2150299.

\end{document}